\documentclass[draft]{siamltex1213}

\usepackage{amsfonts}
\usepackage{amssymb}
\usepackage{amsmath}
\usepackage{cite}
\usepackage{enumerate}



\usepackage[prependcaption,colorinlistoftodos]{todonotes}




%
%

\DeclareMathOperator{\im}{im}

\DeclareMathOperator{\inte}{int}
\DeclareMathOperator{\clo}{cl}

\DeclareMathOperator{\conv}{conv}
\DeclareMathOperator{\cone}{cone}
\DeclareMathOperator{\dom}{dom}

\let\leq\leqslant
\let\geq\geqslant
\let\emptyset\varnothing


\newcommand{\calJ}{\ensuremath{\mathcal{J}}}
\newcommand{\calK}{\ensuremath{\mathcal{K}}}

\newcommand{\calR}{\ensuremath{\mathcal{R}}}

\newcommand{\calT}{\ensuremath{\mathcal{T}}}

\newcommand{\calV}{\ensuremath{\mathcal{V}}}

\newcommand{\calY}{\ensuremath{\mathcal{Y}}}







\newcommand{\bmat}{\begin{matrix}}
\newcommand{\emat}{\end{matrix}}
\newcommand{\bbm}{\begin{bmatrix}}
\newcommand{\ebm}{\end{bmatrix}}
\newcommand{\bpm}{\begin{pmatrix}}
\newcommand{\epm}{\end{pmatrix}}
\newcommand{\bse}{\begin{subequations}}
\newcommand{\ese}{\end{subequations}}
\newcommand{\beq}{\begin{equation}}
\newcommand{\eeq}{\end{equation}}
\newcommand{\ben}{\renewcommand{\labelenumi}{\arabic{enumi}.}
\renewcommand{\theenumi}{\arabic{enumi}}\begin{enumerate}}
\newcommand{\een}{\end{enumerate}}
\newcommand{\beni}{\renewcommand{\labelenumi}{\roman{enumi}.}
\renewcommand{\theenumi}{\roman{enumi}}\begin{enumerate}}
\newcommand{\eeni}{\end{enumerate}\renewcommand{\labelenumi}{\arabic{enumi}.}
\renewcommand{\theenumi}{\arabic{enumi}}}
\newcommand{\bena}{\renewcommand{\labelenumi}{\alpha{enumi}.}
\renewcommand{\theenumi}{\alpha{enumi}}\begin{enumerate}}
\newcommand{\eena}{\end{enumerate}\renewcommand{\labelenumi}{\arabic{enumi}.}
\renewcommand{\theenumi}{\arabic{enumi}}}
\newcommand{\bit}{\begin{itemize}}
\newcommand{\eit}{\end{itemize}}
\newcommand{\bthe}{\begin{theorem}}
\newcommand{\ethe}{\end{theorem}}
\newcommand{\blem}{\begin{lemma}}
\newcommand{\elem}{\end{lemma}}
\newcommand{\bprop}{\begin{proposition}}
\newcommand{\eprop}{\end{proposition}}
\newcommand{\bex}{\begin{example}}
\newcommand{\eex}{\end{example}}
\newcommand{\bas}{\begin{assumption}}
\newcommand{\eas}{\end{assumption}}
\newcommand{\bre}{\begin{remark}}
\newcommand{\ere}{\end{remark}}
\newcommand{\bcor}{\begin{corollary}}
\newcommand{\ecor}{\end{corollary}}
\newcommand{\bdfn}{\begin{definition}}
\newcommand{\edfn}{\end{definition}}



\newcommand{\inv}{\ensuremath{^{-1}}}
\newcommand{\pset}[1]{\ensuremath{\{#1\}}}

\newcommand{\R}{\ensuremath{\mathbb R}}





\DeclareMathOperator{\gr}{gr}

\newtheorem{example}[theorem]{Example}
\newtheorem{asmp}[theorem]{Assumption}
\newtheorem{remark}[theorem]{Remark}
\newtheorem{cor}[theorem]{Corollary}
\newtheorem{prop}[theorem]{Proposition}

\title{A CHARACTERIZATION OF CONTROLLABILITY FOR DISCRETE-TIME LINEAR SYSTEMS WITH CONVEX CONSTRAINTS}

\author{M.~D. Kaba\footnotemark[3]
\and M.~K. Camlibel\footnotemark[1] \footnotemark[2]}


\begin{document}
\maketitle

\renewcommand{\thefootnote}{\fnsymbol{footnote}}

\footnotetext[1]{Johann Bernoulli Institute for Mathematics and Computer Science, University of Groningen, Nijenborgh 9, 9747 AG, Groningen, The Netherlands. (email: {\tt m.k.camlibel@rug.nl})}
\footnotetext[3]{Department of Applied Mathematics and Statistics, Johns Hopkins University, Baltimore, MD USA. (email: {\tt mkaba1@jhu.edu}). This work was done while the author was a Ph.D. student at University of Groningen.}

\renewcommand{\thefootnote}{\arabic{footnote}}

\section{Introduction}
In the study of actual physical systems the constraints arise naturally. Therefore, a reformulation of some fundamental concepts of control theory, like reachability, null-controllability, and controllability  in the presence of constraints, is a necessity. In order to improve the well-established theory of linear systems in this vein, throughout the years, many authors have considered various different problems in the field. Among all the valuable work, the work \cite{Ng:86} of Nguyen and \cite{So:84} of Sontag, which address the null-con\-trol\-la\-bi\-li\-ty problem; the works \cite{Br:72}, \cite{Ev:85}, \cite{EM:77}, and \cite{Sa:73}, which address the controllability problem; and the works \cite{SHS:02, SSS:04, SSSS:04, SSS:03, WSS:10} of Saberi et al., which address the stabilization problem, worth mentioning. The earlier works on the controllability consider the input constraints only. The controllability problem in the presence of state constraints, was not addressed until the appearance of \cite{HC:07}. In this paper, Heemels and Camlibel characterize the controllability of a constrained continuous-time linear system which is right-invertible. They further assume that the constraint set is a solid closed polyhedral cone.

For constrained linear systems, the reachability problem was initially treated within the controllability problem. However, it was discussed in a very limited setting, where only the input constraints were considered. In a more general setting, the reachability of strict closed convex processes were characterized by Aubin et al. in their remarkable paper \cite{AFO:86}. The discrete-time version of \cite{AFO:86} was presented by Phat and Dieu in \cite{PD:94}. In the light of these works, in \cite{KaCa:15a} we provided an almost complete spectral characterization of the controllability for discrete-time linear systems with mixed input and state constraints, where the constraint set was assumed to be a convex cone containing the origin. We divided the problem into three cases and gave a characterization of the reachability in two of them. When a certain subspace $\mathcal{K}(\Sigma)$, closely related to the right-invertibility of the discrete-time system, intersected the interior of the constraint cone, we showed that classical characterization of reachability in terms of invariance properties of dual constrained system can be extended. When $\mathcal{K}(\Sigma)$ and the constraint cone had a trivial intersection, we still managed to characterize the reachability. However, it was shown that this new case required a new sort of characterization. This led to a characterization of the controllability for these two cases. The remaining was a pathological case where the intersection of the subspace and the constraint set was nontrivial and contained in the boundary of the constraint set. In general, no characterization is known for this case, and in \cite{KaCa:15a}, we showed that the known characterizations cannot be extended to this case.

In this paper, we aim to improve our results in \cite{KaCa:15a}, by removing the conicity assumption on the constraint set. We show that a few of the theorems of \cite{KaCa:15a} generalize immediately. However, the characterization of reachability in terms of the invariance properties of the dual constrained system is more involved. Not only the conditions are increased in number, but also the hyperbolicity of the constrained set is crucial. Moreover, the necessity of these dual conditions can only be guaranteed when $\mathcal{K}(\Sigma)$ and the convex constraint set span the whole output space, which is not anymore an immediate consequence of the fact that $\mathcal{K}(\Sigma)$ and the interior of the constrained set had a nontrivial intersection.

We divide our presentation into seven parts. After this introduction we will formulate the problem we would like to discuss. Then we will review the preliminary concepts relevant to our discussion. In the section following this review, we will present our results regarding the reachability. Then we will provide a theorem showing the equivalence of reachability and controllability. After we present the proofs for our results, we will end with some concluding remarks.

\section{Problem formulation}\label{4PF}
Consider the discrete-time linear system
\begin{subequations}\label{e:4dts}
\begin{align}
 x_{k+1} &=Ax_k+Bu_k\label{e:4dts-1}\\
 y_k &= Cx_k+Du_k\label{e:4dts-2}
\end{align} 
where the input $u$, state $x$, and output $y$ have dimensions $m$, $n$, and $s$, respectively.  We denote this system by $\Sigma=\Sigma(A,B,C,D)$.

Given a convex set $\mathcal{Y}\subseteq \mathbb{R}^s$ containing the origin, consider the system \eqref{e:4dts-1}-\eqref{e:4dts-2} together with the output constraint 
\beq\label{e:4dts-3}
y_k\in \mathcal{Y}.
\eeq
\end{subequations}
We will denote the constrained system \eqref{e:4dts} by $(\Sigma,\mathcal{Y})$.

We say that a vector $x\in \mathbb{R}^n$ a \emph{feasible state\/} if there exist $\{u_k\}_{k\geq 0}$ and $\{x_k\}_{k\geq 0}$ with $x_0=x$ such that
\begin{align*}
 x_{k+1} &=Ax_k+Bu_k,\\
 \mathcal{Y} &\ni Cx_k+Du_k,
\end{align*} 
for all $k\geq 0$. A vector $x\in \mathbb{R}^n$ is called a \emph{reachable state\/} if there exist an integer $\ell$ with $\ell\geq 1$, $\{u_k\}_{0\leq k\leq \ell-1}$ and $\{x_k\}_{0\leq k\leq \ell}$ with $x_0=0$ and $x_\ell=x$ such that
\begin{align*}
 x_{k+1} &=Ax_k+Bu_k\\
 \mathcal{Y} &\ni Cx_k+Du_k
\end{align*} 
for all $k$ with $0\leq k\leq \ell-1$. 


If all the feasible states of a constrained system $(\Sigma,\mathcal{Y})$ are reachable, then we say that the system $(\Sigma,\mathcal{Y})$ is \emph{reachable}. 

In this paper, we investigate the conditions which are equivalent to the reachability of a given system $(\Sigma, \mathcal{Y})$.

\section{Preliminaries}
This section is devoted to review the notation and basic notions/results from convex analysis as well as geometric control theory.
\subsection{Convex sets}
We refer to the \emph{Preliminaries} section of \cite{KaCa:15a} for most of the background material. Here we will mainly cover the concepts which cannot be found there.

We will denote the closed unit ball by $\mathbb{B}$. It is the set of all vectors of (Euclidean) norm less than or equal to one in a given real vector space. If $C_1$ and $C_2$ are two subsets of $\mathbb{R}^n$, then $\conv(C_1\cup C_2)$ denotes the \emph{convex hull} of $C_1$ and $C_2$, i.e. the smallest convex set containing $C_1$ and $C_2$.

Let $\mathbb{R}_{\geq 0}$ denote the set of non-negative real numbers and let $C$ be a non-empty convex subset of $\mathbb{R}^h$. We denote the \emph{recession cone} of $C$ by $0^+C$.  That is
$$0^+C = \{y\in \mathbb{R}^h\mid x+\mu y\in C\text{ for all }x\in C \text{ and }\mu \in \mathbb{R}_{\geq 0}\}.$$
The convex set $C$ is bounded if and only if $0^+ C= \{0\}$. The \emph{interior} of $C$, denoted $\inte(C)$, on the other hand, is given by
$$\inte(C) = \{x\in C\mid \exists \epsilon > 0 \text{ s.t. } x+\epsilon\mathbb{B}\subseteq C\}.$$
If the interior of $C$ is non-empty, we say that $C$ is \emph{solid}. The \emph{conic hull} of $C$ will be denoted by $\cone(C)$. It is the smallest convex cone containing $C$ and the origin.

The operation $\langle\cdot,\cdot\rangle$ will denote the standard inner product. We define the \emph{polar set} of $C$ as
\begin{equation}
 C^\circ=\{q\in \mathbb{R}^h\mid\langle q,x\rangle\leq 1 \text{ for all }x \in C\}.
\nonumber\end{equation}
When $C$ is a convex cone containing the origin, the polar set of $C$ coincides with the \emph{negative polar cone} $C^-$ of $C$. We recall that
$$C^-=\{q\in \mathbb{R}^h\mid\langle q,x\rangle\leq 0 \text{ for all }x \in C\}$$
and the \emph{positive polar cone} $C^+$ is then the negative of $C^-$. Section $16$ of \cite{Roc:70} gives an excellent summary of the properties of the polar sets. We refer to this section for further details.

Another important dual concept is the \emph{barrier cone} of $C$, which is defined as
$$C^\mathrm{b} =\{q\in \mathbb{R}^h\mid \sup_{x\in C} \langle q,x\rangle < +\infty\}.$$
It is well-known that
$$\clo(C^\mathrm{b})= (0^+C)^-.$$

The convex set $C$ is called \emph{hyperbolic}, if there exists $\mu\in \mathbb{R}_{\geq 0}$ such that
$$C \subseteq \mu\mathbb{B} + 0^+C.$$ 
In this case, the barrier cone is always closed (\cite[Prop. 5, p.183]{Ba:83}), and we have
$$C^\circ \supseteq \mu^{-1}\mathbb{B}\cap C^\mathrm{b}.$$

\subsection{Set-valued mappings}
For a \emph{set-valued mapping} $H:\mathbb{R}^h\rightrightarrows \mathbb{R}^r$,   $\dom H$ denotes its \emph{domain}, and $\gr(H)$ denotes its \emph{graph}. The \emph{inverse} of $H$, denoted by $H\inv$, is the set-valued mapping defined by
$$(x,y)\in \gr(H\inv)\iff (y,x)\in \gr(H).$$
If $\dom H = \mathbb{R}^h$, then $H$ is called \emph{strict}. We say that $H$ is \emph{convex} if its graph is convex, \emph{closed} if its graph is closed,
and a \emph{process} if its graph is a cone.

Let $S$ be a subset of $\mathbb{R}^h$. We say that $S$ is
\begin{itemize}
 \item \emph{weakly}-${H}$-\emph{invariant} if $H(x)\cap S \neq \emptyset$ for all $x\in S$ (i.e. $S\subseteq H^{-1}(S)$).
 \item \emph{strongly}-${H}$-\emph{invariant} if $H(x)\subseteq S$ for all $x\in S$ (i.e. $H(S)\subseteq S$).
\end{itemize}

We say that a real number $\lambda$ is an \emph{eigenvalue} of the set-valued mapping $H$ if there exists a non-zero vector $x\in \mathbb{R}^h$ such that
$$\lambda x \in H(x).$$
Such a vector $x$ is then called an \emph{eigenvector} of $H$.

Duals of set-valued mappings will play an important role in our study of reachability. For a set-valued mapping $H:\mathbb{R}^r\rightrightarrows \mathbb{R}^r$, we will employ two different dual set-valued mappings $H^\circ$ and $H^-$ defined by
\begin{align*}
\gr(H^\circ)
  &= \begin{bmatrix} 0 & I_r\\ -I_r & 0\end{bmatrix} \gr(H)^\circ\\
\gr(H^-)
  &= \begin{bmatrix} 0 & I_r\\ -I_r & 0\end{bmatrix} \gr(H)^- 
\end{align*}
where $I_r$ denotes the $r\times r$ identity matrix. Note that $H^\circ$ and $H^-$ coincide in case $H$ is a process.

\subsection{Difference inclusions}
Assume that $H$ is convex and $0\in H(0)$. Consider the \emph{difference inclusion}
\begin{equation}\label{4diffinc}
x_{k+1}\in H(x_k).
\end{equation}
An infinite sequence $\{x_k\}_{k\geq 0}\subset \mathbb{R}^h$ satisfying \eqref{4diffinc} is called a \emph{solution} of \eqref{4diffinc}. The set of all initial states, from which a solution of \eqref{4diffinc} starts, will be denoted by $X(H)$. For $\ell \geq 1$, we define the sets 
$$
X_\ell(H) = H^{-\ell}\mathbb{R}^h,\qquad R_\ell(H)=H^\ell(0).$$
We obviously have
$$
X_{\ell+1}(H)\supseteq X_{\ell}(H)\text{ and }R_\ell(H)\subseteq R_{\ell+1}(H)
$$
for all $\ell\geq 1$.
The set of all states reachable from origin in finite steps will be denoted by $R(H)$. That is
$$
R(H)=\bigcup_{\ell\geq 1}R_\ell(H).
$$
Note that
$$
 X(H) \subseteq \bigcap_{\ell\geq 1}X_\ell(H).
$$
However, the equality does not hold in general. A particularly important case is when
$$
X(H)=X_\ell(H)
$$
for some $\ell\geq 1$. In this case, we say that $X(H)$ is \emph{finitely determined}. The set $X(H)$ is the largest weakly-$H$-invariant set and $R(H)$ is the smallest strongly-$H$-invariant set containing the origin.  

The difference inclusion \eqref{4diffinc} is said to be \emph{reachable} if 
$$X(H)\subseteq R(H)$$
and \emph{weakly asymptotically stable} if for each $x\in \dom H$  there exists a solution $\{x_k\}_{k\geq 0}$ of \eqref{4diffinc} with $x_0=x$ satisfying
$$\lim_{k\to \infty} x_k = 0.$$

With a slight abuse of terminology, we sometimes say that a set-valued mapping 
is reachable or weakly asymptotically stable meaning that the corresponding difference inclusion enjoys the mentioned property.

In the rest of this subsection we would like to give a summary of the crucial weak asymptotic stability results from \cite{smirnov:02}.

Suppose that $H$ in \eqref{4diffinc} is a strict convex process. The process $H$ is strict if and only if $H^+(0)=\{0\}$. Moreover, the restriction of the mapping $H^+$ to $\mathcal{W}=\dom (H^+)\cap [-\dom (H^+)]$ is a linear transformation. We will denote the largest subspace invariant under $H^+|_{\mathcal{W}}$ and contained in $\mathcal{W}$ by $\mathcal{J}$. 

\begin{theorem}\label{t:4thmsmir}
Let $H: \mathbb{R}^r\rightrightarrows \mathbb{R}^r$ be a strict convex process. Suppose that all eigenvalues of $H^+$ are less than $1$ and  all eigenvalues of $H^+|_\mathcal{J}$ are in the open unit circle. Then, $H$ is weakly asymptotically stable.
\end{theorem}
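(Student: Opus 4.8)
The plan is to reduce the statement to showing that the \emph{stabilizable cone}
$D:=\{x_0\in\mathbb{R}^r\mid \exists\text{ solution }\{x_k\}\text{ of }\eqref{4diffinc}\text{ with }x_k\to 0\}$
equals all of $\mathbb{R}^r$; since $H$ is strict we have $\dom H=\mathbb{R}^r$, so this is exactly weak asymptotic stability. First I would record the elementary structural facts about $D$: it contains $0$ (because $0\in H(0)$), it is a convex cone (convexity of $\gr H$ lets one average two converging solutions, and the process property lets one rescale a converging solution), and it is weakly-$H$-invariant, since the tail of a solution from $x$ is again a converging solution, so $H(x)\cap D\neq\emptyset$ for every $x\in D$.

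The engine of the argument is the adjoint inequality built into $H^+$. Unwinding its defining relation (obtained, as for $H^-$, by transporting the positive polar of $\gr H$ through $\begin{bmatrix}0&I_r\\-I_r&0\end{bmatrix}$) one gets that $w\in H^+(v)$ and $y\in H(x)$ force $\langle v,y\rangle\geq\langle w,x\rangle$. In particular an eigenpair $\lambda v\in H^+(v)$ with $v\neq 0$ yields $\langle v,x_{k+1}\rangle\geq\lambda\langle v,x_k\rangle$ along \emph{every} solution; choosing $x_0$ with $\langle v,x_0\rangle>0$ then shows that no solution from $x_0$ can converge to $0$ once $\lambda\geq 1$. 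This is the source of \emph{necessity} of the hypotheses, and it dictates the shape of the sufficiency proof: any failure of $D=\mathbb{R}^r$ must be converted into such a forbidden eigenpair.

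For sufficiency I would argue by contradiction in two stages. \emph{Stage one} treats the conic directions: assuming $\overline{D}\neq\mathbb{R}^r$, the proper closed convex cone $\overline{D}$ (weakly-$H$-invariant, since invariance passes to the closure for a closed process) has a nonzero polar $\overline{D}^{-}$, and dualizing weak invariance turns $\overline{D}^{-}$ into a cone carrying the adjoint dynamics $H^+$; extracting a nonzero eigenvector from a compact base of this invariant cone (a Perron--Frobenius/fixed-point step) produces $v\neq 0$ and $\lambda\geq 1$ with $\lambda v\in H^+(v)$, contradicting the assumption that all eigenvalues of $H^+$ are less than $1$, so $\overline{D}=\mathbb{R}^r$. \emph{Stage two} upgrades this approximate conclusion to $D=\mathbb{R}^r$, and here the genuinely \emph{neutral} directions are the obstruction: these are exactly the directions on which $H^+$ degenerates to the linear map $H^+|_{\mathcal{W}}$ and on which contraction cannot be produced by conic flexibility. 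Restricting to the invariant subspace $\mathcal{J}$, the dynamics become linear and decay is governed by the complex spectral radius, so the hypothesis that the eigenvalues of $H^+|_{\mathcal{J}}$ lie in the open unit disk forces strict exponential decay there, while off $\mathcal{J}$ the strict inequality $\lambda<1$ from stage one supplies uniform room to contract; splicing the two behaviours by a selection/limit argument (using strictness of $H$, and hyperbolicity to keep the constructed solution bounded) yields an honest convergent solution from every $x_0$.

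The main obstacle is stage two, the passage from $\overline{D}$ to $D$. The separating-functional argument of stage one only rules out expansion and hence controls $D$ only up to closure; neutral modes where $\langle v,x_k\rangle$ is merely non-increasing (for instance a dual rotation of modulus one) are invisible to real eigenvalues and are precisely what the second hypothesis on $H^+|_{\mathcal{J}}$ is designed to kill. Making this rigorous requires a clean decomposition of the primal space matched to the dual subspaces $\mathcal{W}$ and $\mathcal{J}$ on which $H^+$ is linear, a Perron--Frobenius extraction of an eigenvector for the set-valued $H^+$ on a possibly non-pointed invariant cone, and control of the limit trajectory so that approximate stabilizability becomes exact; the linearity of $H^+|_{\mathcal{W}}$ and the hyperbolicity/boundedness estimates recalled in the preliminaries are exactly the tools that let these steps go through.
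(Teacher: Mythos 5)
Your stage one is where the plan breaks, and it breaks irreparably in the form you state it: the claim that ``all eigenvalues of $H^+$ are less than $1$'' alone forces $\overline{D}=\mathbb{R}^r$ is false. Take $H(x)=Rx$ on $\mathbb{R}^2$, where $R$ is rotation by an angle that is an irrational multiple of $\pi$. This is a strict convex process with $H^+(q)=R^Tq$, and since a nontrivial planar rotation has no real eigenvectors, $H^+$ has no eigenvalues at all in the sense used in the paper (real $\lambda$ with $\lambda q\in H^+(q)$, $q\neq 0$); the first hypothesis therefore holds vacuously. Yet every solution satisfies $\|x_k\|=\|x_0\|$, so $D=\{0\}$ and $\overline{D}\neq\mathbb{R}^2$. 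The failure mechanism is exactly the one you flag but do not resolve: here $\overline{D}^{-}=\mathbb{R}^2$ is a non-pointed invariant cone with no compact base, so the Perron--Frobenius step cannot produce any eigenvector, let alone one with $\lambda\geq 1$. Moreover, even in the pointed case your extraction is too weak: dualizing mere weak invariance of $D$ yields only \emph{some} eigenvalue $\lambda\geq 0$, which does not contradict the hypothesis; nothing in your argument converts the convergence-to-zero property of $D$ into the inequality $\lambda\geq 1$. The conclusion is that the hypothesis on $H^+|_{\mathcal{J}}$ (complex eigenvalues of an honest linear map inside the open unit circle) cannot be quarantined in your stage two: it is needed already to kill neutral rotations, hence already for the density statement $\overline{D}=\mathbb{R}^r$. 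In the example above it is precisely this second hypothesis that fails ($\mathcal{J}=\mathbb{R}^2$ and $H^+|_{\mathcal{J}}=R^T$ has unimodular spectrum), which saves the theorem but sinks your two-stage architecture.

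Two further points. First, your stage two invokes ``hyperbolicity to keep the constructed solution bounded,'' but hyperbolicity is not a hypothesis of Theorem~\ref{t:4thmsmir}; in this paper it is a property of the constraint set $\mathcal{Y}$, and no constraint set appears in this theorem. Second, the paper's own proof (sketched in the two remarks following the statement) runs along a genuinely different and quantitative route that supplies exactly what your plan lacks: after reducing to closed $H$ via the duality $[H^{-\ell}(\mathbb{B})]^\circ=(H^+)^\ell(\mathbb{B})$, it works with the \emph{shifted} process $H-\lambda I_r$ for $\lambda_0(H^+)<\lambda<1$. The discrete analogue of \cite[Lemma 8.3]{smirnov:02} exhibits an explicit binomial-sum solution of \eqref{4diffinc} converging to zero from any $x\in(H-\lambda I_r)^{-k}(0)$, and the discrete analogue of \cite[Thm. 2.14]{smirnov:02}, via the reachability characterization of \cite{KaCa:15a} applied to $(H-\lambda I_r)^{-1}$, gives $\bigcup_{\ell\geq 1}(H-\lambda I_r)^{-\ell}(0)=\mathbb{R}^r$ when $\mathcal{J}=\{0\}$; the general case is then reduced to this one following the end of the proof of \cite[Thm. 8.10]{smirnov:02}. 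The shift is the device that turns the strict bound ``eigenvalues $<1$'' into a statement a conic duality argument can use; your proposal has nothing playing that role, which is precisely why the inequality $\lambda\geq 1$ you need in stage one has no source.
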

 
Theorem~\ref{t:4thmsmir} is the discrete-time analogue of a part of \cite[Thm. 8.10]{smirnov:02}. It is stated as a problem in \cite[Sect. 8.6]{smirnov:02}. In the next two remarks we will sketch a proof of it.

\begin{remark}
First, we note that in \cite{smirnov:02}, the closedness of $H$ is assumed. As long as $H$ is strict, this assumption is, in fact, redundant. It immediately follows from the definitions that $H$ is weakly asymptotically stable if and only if 
$$\bigcap_{\mu\in (0, +\infty)} \left( \bigcup_{\ell\geq 1} H^{-\ell}(\mu \mathbb{B})\right) = \mathbb{R}^n.$$
Since $H$ is a process, one can take $\mu$ out of the big parenthesis. Then, left hand side of the above equation is nothing but the recession cone of $ \bigcup_{\ell\geq 1} H^{-\ell}( \mathbb{B})$. That is, $H$ is weakly asymptotically stable if and only if 
$$0^+ \left( \bigcup_{\ell\geq 1} H^{-\ell}( \mathbb{B})\right) = \mathbb{R}^n.$$ 
This is equivalent to
$$ \bigcup_{\ell\geq 1} H^{-\ell}( \mathbb{B}) = \mathbb{R}^n.$$
Since $H$ is strict, it is possible to show that $[ H^{-\ell}( \mathbb{B})]^\circ = (H^+)^\ell(\mathbb{B})$. Then, it follows from \cite[Cor. 16.5.2]{Roc:70} that $H$ is weakly asymptotically stable if and only if
$$\bigcap_{\ell\geq 1} (H^+)^\ell(\mathbb{B}) = \{0\}.$$
If we let $\overline{H}$ denote the strict closed convex process defined as $\gr(\overline{H}) = \clo(\gr(H))$. Since $H^+ = (\overline{H})^+$, we conclude that $H$ is weakly asymptotically stable if and only if so is $\overline{H}$. 
\end{remark}

\begin{remark}
The main ingredient of the continuous-time version is \cite[Thm. 8.9]{smirnov:02} which is a consequence of \cite[Thm. 2.14 and Lemma 8.3]{smirnov:02}. 

Lemma 8.3 of \cite{smirnov:02} has a discrete-time analogue. It is possible to prove that if $\lambda \in [0,1)$ and $x\in (H-\lambda I_r)^{-k}(0)$ for some $k\geq 1$, then the sequence
 $$\left\{ \sum_{i = \max \{0, j-k\}}^j \binom{j}{i} \lambda^i x_{j-i}\right\}_{j\geq 0}$$
is a solution of \eqref{4diffinc} starting from $x$ and converging to zero.
 
 Regarding Theorem 2.14 of \cite{smirnov:02}, a discrete-time version is possible to formulate too. In Smirnov's notation, $\lambda_0(H^+)$ denotes the largest eigenvalue of $H^+$. When $\lambda > \lambda_0(H^+)$, the process $H-\lambda I_r$ is onto. Hence, $\bigcup_{\ell\geq 1} (H-\lambda I_r)^{-\ell}(0) = \mathbb{R}^r$ if and only if the strict convex process $(H-\lambda I_r)^{-1}$ is reachable. However, this is characterized in \cite{KaCa:15a}. When $\calJ = \{0\}$, the process $(H-\lambda I_r)^{-1}$ is reachable if and only if $((H-\lambda I_r)^{-1})^+$ has no eigenvectors corresponding to a non-negative eigenvalue. But this is guaranteed by the condition $\lambda > \lambda_0(H^+)$. Therefore, we proved: If $\calJ = \{0\}$ and $\lambda > \lambda_0(H^+)$, then $\bigcup_{\ell\geq 1} (H-\lambda I_r)^{-\ell}(0) = \mathbb{R}^r$.
 
 The discrete-time version of \cite[Thm. 8.9]{smirnov:02} reads
 \begin{quote}
  If $\calJ = \{0\}$ and the eigenvalues of $H^+$ are less than $1$, then $H$ is weakly asymptotically stable.
 \end{quote}
 Now its proof is immediate from the arguments of the previous two paragraphs, due to the existence of $\lambda$ with $\lambda_0(H^+) < \lambda < 1$.
 
 Once we proved the discrete-time version of \cite[Thm. 8.9]{smirnov:02}, the proof of Theorem \ref{t:4thmsmir} follows easily, since the arguments in the last four paragraph of the proof of \cite[Thm. 8.10]{smirnov:02} work in discrete-time mutatis mutandis.
\end{remark}

\subsection{Geometric control theory}
Next, we recall the crucial concepts from geometric control theory of linear systems. For details, we refer to \cite{Trent:01}.

For the linear system $\Sigma=\Sigma(A,B,C,D)$ of the form \eqref{e:4dts-1}-\eqref{e:4dts-2}, the \emph{weakly unobservable subspace} will be denoted by $\mathcal{V}^*(\Sigma)$ and the \emph{strongly reachable subspace} by $\mathcal{T}^*(\Sigma)$. The intersection  $\mathcal{V}^*(\Sigma)\cap\mathcal{T}^*(\Sigma)$ is called the \emph{controllable weakly unobservable subspace} and will be denoted by $\mathcal{R}^*(\Sigma)$.

As it was already discussed in \cite{KaCa:15a}, the subspaces
$$\mathcal{K}(\Sigma)= \im D + C\mathcal{T}^*(\Sigma) \quad\text{and}\quad \mathcal{L}(\Sigma)= \ker D \cap B^{-1}\mathcal{V}^*(\Sigma)$$
can be used to characterize the right-(left-)invertibility of the linear system $\Sigma$. Namely, the linear system $\Sigma$ is right-invertible if and only if $\mathcal{K}(\Sigma)= \mathbb{R}^s$. Similarly, it is left-invertible if and only if $\mathcal{L}(\Sigma)= \{0\}$. It is also worth noting that
$$\mathcal{K}(\Sigma)^\perp = \mathcal{L}(\Sigma^T)$$
where $\Sigma^T$ denote the dual of $\Sigma$, that is 
$$\Sigma^T=\Sigma(A^T,C^T,B^T,D^T).$$ 

For a subspace $\mathcal{U}\subseteq \mathbb{R}^m$ of dimension $k$ and an injective linear transformation $E:\mathbb{R}^h\to \mathbb{R}^n$ with $\im E=\mathcal{U}$, we denote the discrete-time linear system $\Sigma(A,BE,C,DE)$ by $(\mathcal{U}, \Sigma )$ and its weakly unobservable subspace by $\mathcal{V}^*(\mathcal{U},\Sigma)$. An important subspace of $\mathcal{V}^*(\mathcal{U},\Sigma)$ is $\mathcal{V}_g^*(\mathcal{U},\Sigma)$. It denotes the set of all initial states $x\in \mathcal{V}^*(\mathcal{U}, \Sigma )$ for which there exists a bounded sequence $\{x_i\}_{i\geq 0}$ with $x_0=x$, and a sequence $\{u_i\}_{i\geq 0}\subset \mathcal{U}$ such that
\begin{align*}
 x_{k+1} &=Ax_k+Bu_k,\\
 0 &= Cx_k+Du_k.
\end{align*}
for all $k\geq 0$.

Having mentioned all the preliminary concepts, we are ready to present our main results.

\section{Main results}
Throughout this paper, the following blanket assumption will be in force. 
\begin{asmp}\label{4asmp}
 The convex set $\mathcal{Y}\cap \im\begin{bmatrix} C &  D\end{bmatrix}$ is solid.
\end{asmp}
It can be shown exactly like the conic constraint case \cite{KaCa:15a} that this assumption does not cause any loss of generality. An immediate consequence of Assumption \ref{4asmp} is that $\mathcal{Y}$ is solid and $\begin{bmatrix} C &  D\end{bmatrix}$ is surjective.

Our aim is to carry the reachability problem to the setting of set-valued mappings and derive a characterization by combining the techniques of geometric control theory with that of convex analysis. Therefore, as a first step, we reformulate the discrete-time constrained linear system \eqref{e:4dts} as a difference inclusion
$$
 x_{k+1}\in F(x_k)
\nonumber
$$
where $F:\mathbb{R}^n\rightrightarrows \mathbb{R}^n$ is a convex set-valued mapping given by
$$
 F(x)=Ax+BD^{-1}(\mathcal{Y}-Cx),
$$
or equivalently, described by
\begin{equation}\label{e:4DefnGrF}
 \gr(F)=
\begin{bmatrix}
 I_n & 0\\ A & B
\end{bmatrix} 
\begin{bmatrix}
 C & D
\end{bmatrix}^{-1} \mathcal{Y}.
\end{equation}
%
%

Clearly, $(\Sigma, \mathcal{Y})$ is reachable if and only if $F$ is reachable, that is
$$
X(F) \subseteq R(F). 
$$
In the analysis of reachability, we will employ the following two convex processes associated to $F$:
\begin{subequations}
\begin{align}
\gr(F_{\mathrm{con}})
&=
\begin{bmatrix}
 I_n & 0\\ A & B
\end{bmatrix} 
\begin{bmatrix}
 C & D
\end{bmatrix}^{-1} \cone(\mathcal{Y})\label{e:Fcon}\\
\gr(F_{\mathrm{rec}})
&=
\begin{bmatrix}
 I_n & 0\\ A & B
\end{bmatrix} 
\begin{bmatrix}
 C & D
\end{bmatrix}^{-1} (0^+\mathcal{Y})\label{e:Frec}
\end{align}
\end{subequations}
Note that these convex processes can be obtained by using the cones $\cone(\calY)$ and $0^+\calY$ instead of $\calY$ in \eqref{e:4DefnGrF} and hence correspond to the constrained linear systems $\big(\Sigma,\cone(\mathcal{Y})\big)$ and $(\Sigma,0^+\mathcal{Y})$, respectively.

Not only $F$, $F_{\mathrm{con}}$, and $F_{\mathrm{rec}}$ but also their duals will play an important role in our study of reachability. It follows from \eqref{e:4DefnGrF}, Assumption \ref{4asmp}, and \cite[Cor. $16.3.2$]{Roc:70} that
\begin{equation}\label{e:4Fpolar}
\gr(F^\circ) = \begin{bmatrix}
  A^T & -I_n\\ B^T & 0
  \end{bmatrix}^{-1}
  \begin{bmatrix}
  C^T \\ D^T
  \end{bmatrix}\mathcal{Y}^\circ.
\end{equation}
In a similar fashion, it follows from \eqref{e:Fcon}, Assumption \ref{4asmp}, and \cite[Cor. $16.3.2$]{Roc:70} that

$$
 \gr((F_{\mathrm{con}})^\circ) = \gr(F^-) =
\begin{bmatrix}
  A^T & -I_n\\ B^T & 0
  \end{bmatrix}^{-1}
  \begin{bmatrix}
  C^T \\ D^T
  \end{bmatrix}
  \calY^-.
$$
If $\calY^\mathrm{b}$ is closed, then $\calY^\mathrm{b} = (0^+\calY)^-$. In such a case, we will denote $(F_{\mathrm{rec}})^\circ$ by $F^\mathrm{b}$. It follows from \eqref{e:Frec}, Assumption \ref{4asmp}, and \cite[Cor. $16.3.2$]{Roc:70} that

$$
\gr((F_{\mathrm{rec}})^\circ) = \gr(F^\mathrm{b})
  = \begin{bmatrix}
  A^T & -I_n\\ B^T & 0
  \end{bmatrix}^{-1}
  \begin{bmatrix}
  C^T \\ D^T
  \end{bmatrix}\mathcal{Y}^\mathrm{b}.
$$
whenever $\calY^\mathrm{b}$ is closed.

The dual set-valued mappings $F^\circ$, $F^-$, and $F^{\mathrm{b}}$ are closely related to the dual linear system $\Sigma^T$ and they can be can equivalently be described by
\bse\label{e:FpolSigmaT}
\begin{align}
F^\circ(q)&=\{A^Tq+C^Tv\mid v\in -\mathcal{Y}^\circ\text{ and }B^Tq+D^Tv=0\}\label{e:FpolSigmaT.1}\\
F^-(q)&=\{A^Tq+C^Tv\mid v\in \mathcal{Y}^+\text{ and }B^Tq+D^Tv=0\}\label{e:FpolSigmaT.2}\\
F^\mathrm{b}(q)&=\{A^Tq+C^Tv\mid v\in -\mathcal{Y}^b\text{ and }B^Tq+D^Tv=0\}.\label{e:FpolSigmaT.3}
\end{align}
\ese
Note that $F^\circ$ is a closed convex set-valued mapping but not necessarily a process, whereas both $F^-$ and $F^{\mathrm{b}}$ are closed convex processes. 

Depending on the intersection of the subspace $\mathcal{K}(\Sigma)$ and the convex set $\mathcal{Y}$, we distinguish three cases:
\begin{enumerate}
 \item \label{i:4case1}$\mathcal{K}(\Sigma)\cap \inte(\mathcal{Y})\neq \emptyset$.
 \item \label{i:4case2}$\mathcal{K}(\Sigma)\cap\mathcal{Y}=\{0\}$.
 \item \label{i:4case3}$\{0\}\subsetneqq \mathcal{K}(\Sigma)\cap\mathcal{Y}\subseteq \mathcal{K}(\Sigma)\setminus \inte(\mathcal{Y})$.
\end{enumerate}
Note that since $\mathcal{Y}$ is solid, this is an exhaustive list of possibilities. We will treat the first two cases in what follows. However, the last case will not be studied in this paper. We refer \cite{KaCa:15a} for a discussion of this case when the constraint set $\calY$ is a convex cone.

\subsection{Case \ref{i:4case1}: $\mathcal{K}(\Sigma)\cap \inte(\mathcal{Y})\neq \emptyset$}
We begin with establishing a duality relation between $R(F)$ and $X(F^\circ)$.
\begin{theorem}\label{t:4sumfullauxthm2}
Suppose that $\mathcal{K}(\Sigma)\cap \inte(\mathcal{Y})\neq \emptyset$. Then, $$X(F^\circ)=R(F)^\circ.$$ 
\end{theorem}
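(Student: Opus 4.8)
The plan is to establish the two inclusions $R(F)^\circ\subseteq X(F^\circ)$ and $X(F^\circ)\subseteq R(F)^\circ$ separately. I would lean on the structural facts recalled in the preliminaries: $X(F^\circ)$ is the \emph{largest} weakly-$F^\circ$-invariant set, while $R(F)=\bigcup_{\ell\ge1}R_\ell(F)$ is the \emph{smallest} strongly-$F$-invariant set containing the origin, so that $R(F)^\circ=\bigcap_{\ell\ge1}R_\ell(F)^\circ$ is automatically closed and convex and contains $0$.

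For $R(F)^\circ\subseteq X(F^\circ)$ I would show that $R(F)^\circ$ is itself weakly-$F^\circ$-invariant and then invoke maximality of $X(F^\circ)$. Fixing $p\in R(F)^\circ$, I must produce $q\in F^\circ(p)$ lying again in $R(F)^\circ$. By the description \eqref{e:FpolSigmaT.1} this means choosing a multiplier $v\in-\mathcal{Y}^\circ$ with $B^Tq+D^Tv=0$ and $q=A^Tp+C^Tv\in R(F)^\circ$. Since $R(F)$ is strongly $F$-invariant, $\langle p,y\rangle\le1$ for every $y\in F(x)$ with $x\in R(F)$, so the existence of such a $q$ is a consistency problem for an affine inequality system that I would resolve by a Hahn--Banach/separation argument. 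This is exactly where the hypothesis $\mathcal{K}(\Sigma)\cap\inte(\mathcal{Y})\neq\emptyset$ is meant to enter: in concert with Assumption~\ref{4asmp} and \cite[Cor.~16.3.2]{Roc:70} it guarantees that the polar commutes with the linear images defining $\gr(F^\circ)$ without any spurious closure, and that a finite multiplier $v$ is available, making the selection $q$ genuinely realizable.

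For $X(F^\circ)\subseteq R(F)^\circ$ I would take $p\in X(F^\circ)$ with an infinite dual trajectory $q_0=p,q_1,q_2,\dots$, $q_{k+1}\in F^\circ(q_k)$, carrying multipliers $v_k\in-\mathcal{Y}^\circ$ satisfying $B^Tq_k+D^Tv_k=0$ and $q_{k+1}=A^Tq_k+C^Tv_k$. For an arbitrary $y\in R_\ell(F)$ reached by $x_0=0,\dots,x_\ell=y$ with outputs $w_k=Cx_k+Du_k\in\mathcal{Y}$, I would pair the trajectories by $s_k=\langle q_{\ell-k},x_k\rangle$; the transition relations collapse the cross terms and give the clean increment $s_{k+1}-s_k=-\langle v_{\ell-1-k},w_k\rangle$, whence $\langle p,y\rangle=\sum_{k=0}^{\ell-1}\langle -v_{\ell-1-k},w_k\rangle$. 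Each summand is at most $1$ because $-v_i\in\mathcal{Y}^\circ$ and $w_k\in\mathcal{Y}$.

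The main obstacle is precisely this last estimate. The per-step constants accumulate, so the bare telescoping yields only $\langle p,y\rangle\le\ell$, whereas membership $p\in R(F)^\circ$ demands the normalized bound $\langle p,y\rangle\le1$. In the conic setting of \cite{KaCa:15a} the polar inequalities are homogeneous and these constants vanish identically; with an affine constraint set $\mathcal{Y}$ the normalization is genuinely at stake, and controlling it is the crux of the theorem. I would expect to close the gap by using the hypothesis to pin down the directions in which $R_\ell(F)$ can grow --- concretely, by bringing in the recession and conic processes $F_{\mathrm{rec}}$ and $F_{\mathrm{con}}$ and a finite-determinacy argument --- so that the summation reduces to a single effective step and the tight bound $\langle p,y\rangle\le1$ holds throughout $R(F)$. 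Converting that structural statement into the clean constant is the delicate point, and I anticipate it absorbing the bulk of the work.
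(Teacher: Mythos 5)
Both halves of your plan contain genuine gaps, and the obstacle you flag in the second half is not merely a difficulty of your method. For the inclusion $R(F)^\circ\subseteq X(F^\circ)$, producing a successor $q\in F^\circ(p)\cap R(F)^\circ$ for each $p\in R(F)^\circ$ is the entire content of the statement, and a single Hahn--Banach separation will not produce it: you need a point satisfying infinitely many constraints simultaneously ($q\in R_\ell(F)^\circ$ for every $\ell$), which is a finite-intersection-property/compactness problem. The paper obtains exactly this existence statement by a different route: it writes $R(F)=\bigcup_{\ell\geq 1}F^\ell(\mathcal{T}^*(\Sigma))$, dualizes to $R(F)^\circ=\bigcap_{\ell\geq 1}[F^\ell(\mathcal{T}^*(\Sigma))]^\circ$, identifies each term with $(F^\circ)^{-\ell}\mathcal{V}^*(\Sigma^T)$ (Lemma~\ref{l:4SLem}.\ref{i:4sfalem1}), and proves that this nested intersection is weakly-$F^\circ$-invariant by a Helly-theorem argument whose compactness input comes from the hypothesis $\mathcal{K}(\Sigma)\cap\inte(\mathcal{Y})\neq\emptyset$ (Lemma~\ref{l:4SLem}, parts \ref{i:4SLem2} and \ref{i:4sfalem2}). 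None of that machinery is supplied by your sketch.

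For the inclusion $X(F^\circ)\subseteq R(F)^\circ$, your telescoping identity is correct, and the bound $\langle p,y\rangle\leq\ell$ is the best any per-step argument can give: a dual trajectory carries one multiplier $v_k\in-\mathcal{Y}^\circ$ per step, i.e.\ a point of the product $(\mathcal{Y}^\circ)^\ell$, whereas the constant-$1$ polar inequality requires a multiplier tuple in $(\mathcal{Y}^\ell)^\circ$, which is strictly smaller when $\mathcal{Y}$ is not a cone. This gap cannot be closed, because the inclusion itself fails under the stated hypotheses. Take $n=m=s=1$, $A=B=D=1$, $C=0$, $\mathcal{Y}=[-1,1]$: Assumption~\ref{4asmp} holds, $\mathcal{T}^*(\Sigma)=\{0\}$, and $\mathcal{K}(\Sigma)=\mathbb{R}$ meets $\inte(\mathcal{Y})$; here $F(x)=x+[-1,1]$, so $R(F)=\mathbb{R}$ and $R(F)^\circ=\{0\}$, while \eqref{e:FpolSigmaT.1} gives $F^\circ(q)=\{q\}$ for $|q|\leq 1$ and $F^\circ(q)=\emptyset$ otherwise, so $X(F^\circ)=[-1,1]$. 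The same example falsifies Lemma~\ref{l:4SLem}.\ref{i:4sfalem1}: one computes $[F^\ell(\mathcal{T}^*(\Sigma))]^\circ=[-1/\ell,1/\ell]$ but $(F^\circ)^{-\ell}\mathcal{V}^*(\Sigma^T)=[-1,1]$. The source of the discrepancy in the paper's proof of that lemma is precisely your normalization issue: it computes the polar of $\mathcal{Y}^\ell\times\{0\}$ as $(\mathcal{Y}^\circ)^\ell\times\mathbb{R}^{ns}$, i.e.\ it replaces the polar of a Cartesian product by the product of the polars; these coincide for cones (the setting of \cite{KaCa:15a}) but not for general convex sets --- for instance $([-1,1]^2)^\circ$ is the $\ell_1$-ball, not $[-1,1]^2$. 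So your diagnosis that controlling the per-step constants is the crux is exactly right; what you could not prove is not provable, in this generality only the inclusion $R(F)^\circ\subseteq X(F^\circ)$ can be salvaged, and restoring equality would require either conic $\mathcal{Y}$ or a reformulated dual object whose multipliers are jointly normalized along the whole trajectory.
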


This theorem is a key step in carrying the reachability problem to the dual world. As a first step towards a spectral characterization of reachability in this case, we will first derive some sufficient conditions and later on discuss when these sufficient conditions are also necessary. A natural sufficient condition for the reachability of $(\Sigma,\mathcal{Y})$ is
\begin{equation}\label{e:4rfrn}
R(F)=\mathbb{R}^n, 
\end{equation}
which admits the following characterization by Theorem~\ref{t:4sumfullauxthm2}.

\begin{cor}\label{t:4sumfullthm}
Suppose that $\mathcal{K}(\Sigma)\cap \inte(\mathcal{Y})\neq \emptyset$. Then $R(F)=\mathbb{R}^n$ if and only if $X(F^\circ)=\{0\}$.
\end{cor}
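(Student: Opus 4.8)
The plan is to read this off almost directly from Theorem~\ref{t:4sumfullauxthm2}, the only genuine work being a bipolar argument together with the convexity of $R(F)$. Since Theorem~\ref{t:4sumfullauxthm2} supplies the identity $X(F^\circ)=R(F)^\circ$ under the standing hypothesis $\mathcal{K}(\Sigma)\cap\inte(\mathcal{Y})\neq\emptyset$, it suffices to establish, for the convex set $R(F)$, the equivalence between $R(F)^\circ=\{0\}$ and $R(F)=\mathbb{R}^n$.

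First I would record two structural facts about $R(F)$. Because $R(F)=\bigcup_{\ell\geq 1}R_\ell(F)$ is a nested union of the convex sets $R_\ell(F)=F^\ell(0)$, it is itself convex; and because $0\in\mathcal{Y}$ forces $0\in F(0)=R_1(F)$, it contains the origin. These are exactly the hypotheses needed to invoke the bipolar theorem in the form $R(F)^{\circ\circ}=\clo\bigl(\conv(R(F)\cup\{0\})\bigr)=\clo(R(F))$ (cf.\ \cite[Thm.~14.5]{Roc:70}).

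The forward implication is then immediate: if $R(F)=\mathbb{R}^n$, then $R(F)^\circ=(\mathbb{R}^n)^\circ=\{0\}$, and hence $X(F^\circ)=\{0\}$. For the converse, I would suppose $X(F^\circ)=\{0\}$, i.e.\ $R(F)^\circ=\{0\}$, and take polars: using the bipolar identity above together with $\{0\}^\circ=\mathbb{R}^n$ yields $\clo(R(F))=\mathbb{R}^n$.

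The one step I expect to require care is the passage from $\clo(R(F))=\mathbb{R}^n$ back to $R(F)=\mathbb{R}^n$, since in general a set is strictly smaller than its closure. Here convexity rescues us: for a convex set one has $\rint(R(F))=\rint(\clo(R(F)))$ (\cite[Thm.~6.3]{Roc:70}), so $\clo(R(F))=\mathbb{R}^n$ forces $\rint(R(F))=\rint(\mathbb{R}^n)=\mathbb{R}^n$, whence $R(F)=\mathbb{R}^n$. This closes the equivalence and completes the proof.
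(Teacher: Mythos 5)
Your proof is correct and takes essentially the same route as the paper, which derives the corollary directly from Theorem~\ref{t:4sumfullauxthm2} via the identity $X(F^\circ)=R(F)^\circ$. The paper leaves the polar/bipolar step implicit, and your treatment of it---convexity of $R(F)$, the fact that $0\in R(F)$, and the passage from $\clo(R(F))=\mathbb{R}^n$ to $R(F)=\mathbb{R}^n$ via $\rint(R(F))=\rint(\clo(R(F)))$---is exactly the right way to fill in that detail.
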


This corollary provides a sufficient condition for reachability in terms of the dual set-valued mapping. However, it is far from being useful for practical purposes. Because, verifying the condition $X(F^\circ)=\pset{0}$ is a hard task as $F^\circ$ is merely a closed convex set-valued mapping and not necessarily a process in general. Yet, it is possible to provide easily verifiable spectral conditions in terms of the reachability of $F_{\mathrm{con}}$ and the stability of $F_{\mathrm{rec}}^{-1}$ under the assumption that the constraint set $\calY$ is hyperbolic.

\begin{theorem}\label{t:4sumfullthm2}
Suppose that $\mathcal{K}(\Sigma)\cap \inte(\mathcal{Y})\neq \emptyset$ and $\mathcal{Y}$ is hyperbolic. Then the following conditions are equivalent:
\begin{enumerate}
 \item \label{i:4sft21} $X(F^\circ)=\{0\}$.
 \item \label{i:4sft22} Both of the following conditions hold:
       \begin{enumerate}
        \item \label{i:4sft221} $F_{\mathrm{con}}$ is reachable.
        \item \label{i:4sft222} $(F_{\mathrm{rec}})^{-1}$ is strict and weakly asymptotically stable.
       \end{enumerate}
 \item \label{i:4sft23} All of the following conditions hold:
        \begin{enumerate}
        \item\label{4lschar} $\Sigma$ is controllable.        
        \item\label{4evchar} For all $\lambda\in [0,+\infty)$, $u\in \mathcal{Y}^+$ and $q\in \mathbb{R}^n$, 
        $$\begin{bmatrix}
                  A^T-\lambda I_n\\ B^T
                \end{bmatrix}q+\begin{bmatrix}
                 C^T\\ D^T
                \end{bmatrix}u= 0\Rightarrow q=0.$$
        \item\label{i:4sft234}$\mathcal{V}_g^*([0^+\mathcal{Y}]^\perp, \Sigma^T)=\{0\}.$
        \item\label{i:4sft233} For all $\lambda\in [0,1]$, $u\in -\mathcal{Y}^\mathrm{b}$ and $q\in \mathbb{R}^n$, 
        $$\begin{bmatrix}
                  A^T-\lambda I_n\\ B^T
                \end{bmatrix}q+\begin{bmatrix}
                 C^T\\ D^T
                \end{bmatrix}u= 0\Rightarrow q=0.$$
        \end{enumerate}
 \end{enumerate}
\end{theorem}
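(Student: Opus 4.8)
The plan is to prove the two equivalences (1)$\Leftrightarrow$(3) and (2)$\Leftrightarrow$(3); together they close the cycle. The first is the analytic core, relating infinite trajectories of the (non-process) dual map $F^\circ$ to the two convex processes $F^-=(F_{\mathrm{con}})^\circ$ and $F^{\mathrm{b}}=(F_{\mathrm{rec}})^\circ$. The second is a spectral translation using the Case~1 reachability characterization of \cite{KaCa:15a} and Smirnov's stability theorem (Theorem~\ref{t:4thmsmir}).

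Before anything else I would record the convex geometry driving the argument. Since $\mathcal{Y}$ is solid and contains the origin, $0^+(\mathcal{Y}^\circ)=\mathcal{Y}^-$ and $\mathcal{Y}^\circ\subseteq\mathcal{Y}^{\mathrm{b}}$, while hyperbolicity closes $\mathcal{Y}^{\mathrm{b}}$ and supplies the reverse sandwich $\mu^{-1}\mathbb{B}\cap\mathcal{Y}^{\mathrm{b}}\subseteq\mathcal{Y}^\circ$. Reading off \eqref{e:FpolSigmaT}, these give $F^-(q)\subseteq F^\circ(q)$ for every $q$ (because $\mathcal{Y}^+\subseteq-\mathcal{Y}^\circ$) and the fact that any selection of $F^{\mathrm{b}}$ obtained with an input $v$, $\norm{v}\leq\mu^{-1}$, is also a selection of $F^\circ$ (because $-v\in\mathcal{Y}^{\mathrm{b}}\cap\mu^{-1}\mathbb{B}\subseteq\mathcal{Y}^\circ$). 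With these in hand, (1)$\Rightarrow$(3) is proved by contraposition, manufacturing a nonzero infinite $F^\circ$-trajectory from the failure of any of (3a)--(3d): an uncontrollable mode of $(A,B)$ gives a zero-input trajectory $q_{k+1}=A^Tq_k$; a nonnegative eigenvector of $F^-$ gives, via $F^-\subseteq F^\circ$ and positive homogeneity, a ray $\{\lambda^kq\}$ violating (3b); an eigenvector of $F^{\mathrm{b}}$ for $\lambda\in[0,1]$, rescaled so its input has norm $\leq\mu^{-1}$ and kept bounded by $\lambda\leq1$, gives a bounded $F^\circ$-trajectory violating (3d); and a nonzero element of $\mathcal{V}_g^*([0^+\mathcal{Y}]^\perp,\Sigma^T)$ gives a bounded, output-nulling trajectory whose (boundedly chosen) inputs lie in $[0^+\mathcal{Y}]^\perp\subseteq\mathcal{Y}^{\mathrm{b}}\cap-\mathcal{Y}^{\mathrm{b}}$ and, after rescaling, in $-\mathcal{Y}^\circ$, violating (3c). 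Each produces $X(F^\circ)\neq\{0\}$.

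For (2)$\Leftrightarrow$(3) I would split along the two processes. Condition (2a) is reachability of the conic system $(\Sigma,\cone\mathcal{Y})$; since $\mathcal{K}(\Sigma)\cap\inte(\cone\mathcal{Y})\neq\emptyset$ follows from $\mathcal{K}(\Sigma)\cap\inte(\mathcal{Y})\neq\emptyset$, the Case~1 characterization of \cite{KaCa:15a} applies and yields (2a)$\Leftrightarrow$[(3a) and (3b)], once one checks that (3b) says exactly that the dual process $F^-$ of $F_{\mathrm{con}}$ has no eigenvector for a nonnegative eigenvalue. For (2b) I would identify strictness of $(F_{\mathrm{rec}})^{-1}$ (equivalently $((F_{\mathrm{rec}})^{-1})^+(0)=\{0\}$) together with triviality of the neutral subspace $\mathcal{J}$ with condition (3c), through the geometric-control reading of the domain and of the bounded output-nulling trajectories of $\Sigma^T$ with inputs in the lineality $[0^+\mathcal{Y}]^\perp$; and I would obtain the weak-asymptotic-stability part from Theorem~\ref{t:4thmsmir} applied to $(F_{\mathrm{rec}})^{-1}$, observing that the eigenvalues of $((F_{\mathrm{rec}})^{-1})^+$ are the reciprocals of the real eigenvalues of $F^{\mathrm{b}}=(F_{\mathrm{rec}})^-$, so that the hypothesis ``all eigenvalues less than $1$'' becomes (3d), the values $\lambda=0$ and the $\mathcal{J}$-part being covered by (3c). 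Sufficiency uses Theorem~\ref{t:4thmsmir} as stated; necessity uses its contrapositive together with the eigen-trajectory constructions of the previous paragraph.

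The main obstacle is the remaining direction (3)$\Rightarrow$(1): assuming all of (3a)--(3d), rule out every nonzero infinite trajectory $\{q_k\}$ of $F^\circ$. The difficulty is that $F^\circ$ is genuinely non-conic, so one cannot homogenize, and the bounded and unbounded trajectories must be separated. For an unbounded trajectory I would normalize and pass to a limit to extract a nonzero trajectory of the recession process $F^-$ whose asymptotic direction is a nonnegative eigenvector, contradicting (3b) (with (3a) supplying the underlying linear reachability); for a bounded trajectory I would invoke the weak asymptotic stability of $(F_{\mathrm{rec}})^{-1}$ furnished by (3c)--(3d), which, through the hyperbolic sandwich identifying bounded $F^\circ$-trajectories with bounded $F^{\mathrm{b}}$-trajectories, forbids a persistent bounded trajectory. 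Making the normalization/limit (a Perron-type step for convex processes) and the sandwich estimate quantitatively consistent, in particular keeping the extracted inputs inside $-\mathcal{Y}^\circ$, is exactly where hyperbolicity of $\mathcal{Y}$ is indispensable and is the crux of the theorem. Alternatively this direction can be routed as (3)$\Rightarrow$(2)$\Rightarrow$(1), in which case (2)$\Rightarrow$(1) becomes the obstacle and draws directly on the trajectory-level content of weak asymptotic stability.
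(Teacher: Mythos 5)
Your opening direction (1)$\Rightarrow$(3) and your (3)$\Rightarrow$(2) coincide with the paper's proof: the same use of $\calY^-\subseteq\calY^\circ$ and $F^-\subseteq F^\circ$ for \ref{4lschar}--\ref{4evchar}, the same hyperbolic rescaling of bounded input sequences into $\calY^\circ$ for \ref{i:4sft234}--\ref{i:4sft233}, and the same appeal to Theorem~6.3 of \cite{KaCa:15a} and Theorem~\ref{t:4thmsmir} (with the decomposition $\calJ=\calJ_1\oplus\calJ_2$ handled by \ref{i:4sft234}). The genuine gap is that you never close the cycle back to \ref{i:4sft21}: both of your proposed routes are flagged by you as ``the obstacle,'' and the sketch you give for the direct route (3)$\Rightarrow$(1) would not survive scrutiny. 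From an unbounded $F^\circ$-trajectory $\{q_k\}$, normalizing and passing to a limit only places limit pairs in $0^+\gr(F^\circ)=\gr(F^-)$; it does not produce a nonzero \emph{infinite} $F^-$-trajectory, let alone a nonnegative eigenvector. The successive ratios $\|q_{k+1}\|/\|q_k\|$ need not stay bounded, and the extraction can collapse to a single nonzero vector of $F^-(0)$, which contradicts neither \ref{4lschar} nor \ref{4evchar}; the Perron-type step you invoke requires compactness/invariant-cone structure that $F^\circ$, being genuinely non-conic, does not supply.

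The missing idea is the tool the paper already established and that you never invoke: Theorem~\ref{t:4sumfullauxthm2}, i.e.\ $X(F^\circ)=R(F)^\circ$ under the standing hypothesis. With it, (2)$\Rightarrow$(1) is short and entirely non-spectral: reachability of the process $F_{\mathrm{con}}$ gives $R_\ell(F_{\mathrm{con}})=\mathbb{R}^n$ for some $\ell$, and since $\gr(F_{\mathrm{con}})$ is the conic hull of $\gr(F)$ this forces $0\in\inte\bigl(R_\ell(F)\bigr)$, i.e.\ $\mu\mathbb{B}\subseteq R(F)$ for some $\mu>0$; strictness and weak asymptotic stability of $(F_{\mathrm{rec}})^{-1}$ give $\bigcup_{\ell\geq 1}F_{\mathrm{rec}}^\ell(\mu\mathbb{B})=\mathbb{R}^n$, and $\gr(F_{\mathrm{rec}})\subseteq\gr(F)$ then yields $R(F)=\mathbb{R}^n$, whence $X(F^\circ)=R(F)^\circ=\{0\}$. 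Note why the duality theorem is indispensable rather than cosmetic: the naive pairing of an $F^\circ$-trajectory with a finite $F$-trajectory only gives $\langle q,x\rangle\leq \ell$ for $x\in R_\ell(F)$, a bound growing with $\ell$, so $R(F)=\mathbb{R}^n$ does not kill $X(F^\circ)$ at the trajectory level. Finally, your item-by-item claim (2)$\Leftrightarrow$(3) is both redundant (the cycle needs only three implications) and unsupported as stated: it requires the \emph{converse} of Theorem~\ref{t:4thmsmir} (necessity of the spectral conditions for weak asymptotic stability), and the ``contrapositive'' of a sufficiency statement is logically equivalent to it, not to its converse.
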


\begin{remark}
The hyperbolicity condition on $\calY$ is needed in order to provide spectral conditions given in the last part of the theorem. Indeed, even the input constraint reachability problem does not admit a spectral characterization (see e.g. \cite{Ng:85} for continuous-time and \cite{Ng:81} for discrete-time systems) without the hyperbolicity condition. Note that $\calY$ is naturally hyperbolic when it is a cone, a bounded set, or a polyhedral set.
\end{remark}

\begin{remark}
An important special case arises when $\mathcal{Y}$ is a cone. In this case, $\mathcal{Y}$ is readily hyperbolic. Furthermore, the cones $\mathcal{Y}^-$ and $\mathcal{Y}^\mathrm{b}$ coincide. As such, Theorem~\ref{t:4sumfullthm2} boils down to the sufficiency part of Theorem~6.3 in \cite{KaCa:15a}. 
\end{remark}

\begin{remark}
Theorem~\ref{t:4sumfullthm2} reveals why the convex output constraint case cannot be solved without studying the convex conic output constraint case first. Indeed, the condition \eqref{i:4sft221} requires investigating reachability under conic constraints even if the constraint set $\mathcal{Y}$ is not a cone.
\end{remark}

\begin{remark}
In case of conic constraints, the assumption $\mathcal{K}(\Sigma)\cap \inte(\mathcal{Y})\neq \emptyset$ yields $\mathcal{T}^*(\Sigma) + \dom (F) = \mathbb{R}^n$. In the absence of conicity, the same equality is too much to hope for. However, one can still show that $0\in \inte(\mathcal{T}^*(\Sigma) + \dom (F))$. In \cite{KaCa:15a}, we showed that for a convex process $H: \mathbb{R}^n \rightrightarrows \mathbb{R}^n$, if there exists a subspace $\mathcal{W}\subseteq F^\ell(0)$ for some $\ell\geq 0$, satisfying $\mathcal{W} \subseteq H(\mathcal{W}) $ and $\mathcal{W} + \dom (H) = \mathbb{R}^n$, we could then make the following strong conclusions
\begin{itemize}
\item Reachability of $H$ implies $R(H) = \mathbb{R}^n$,
\item $X(H)$ is finitely determined.
\end{itemize}
When we remove the conicity assumption, and let $H$ be a convex set-valued mapping only, and replace the assumption
$$\mathcal{W} + \dom (H) = \mathbb{R}^n$$
with the appropriate analogue of it
$$0\in \inte(\mathcal{W} + \dom (H)),$$
the two conclusions above both fail to hold. A simple example to this fact is given by the convex set-valued mapping $H: \mathbb{R}^n \rightrightarrows \mathbb{R}^n$ defined as
$$\gr(H) := \{(x,y)| \quad |x|\leq 1\text{ and } y \geq 2|x|\}.$$
It is easy to see that $X(H)=\{0\}$, hence $H$ is reachable. However, neither $X(F)$ is finitely determined, nor $R(F)$ is equal to $\mathbb{R}^n$.
\end{remark}

So far we provided conditions that are equivalent to the relation \eqref{e:4rfrn} and hence are sufficient for the reachability of $(\Sigma, \mathcal{Y})$. When $\calY$ is a cone, the condition $\mathcal{K}(\Sigma)\cap \inte(\mathcal{Y})\neq \emptyset$ is satisfied if and only if we have $\mathcal{K}(\Sigma)+ \mathcal{Y}=\R^s$. In this case, we know from Theorem~6.3 of \cite{KaCa:15a} that the equality \eqref{e:4rfrn} is also a necessary condition for reachability. However, its necessity does not come naturally when $\calY$ is merely a convex set under the condition $\mathcal{K}(\Sigma)\cap \inte(\mathcal{Y})\neq \emptyset$. We will discuss this in the next example.

\begin{example}
Suppose that $\Sigma$ is given by the matrices
\begin{alignat*}{2}
A = \bbm 0 & 1\\ 1 & 0\ebm \qquad&& B = \bbm 1\\ 0\ebm \\
C = \bbm 0 & 0\\ 0 & 1\ebm \qquad&& D = \bbm 1\\ 0 \ebm 
\end{alignat*}
and the constraint set is
$$\calY = [-1,1]\times [-1,1].$$
Since $\calY$ is bounded, it is hyperbolic.

It can easily be shown that $\calT^*(\Sigma) = \{0\}$, hence
$$\calK(\Sigma) = \im D = \im \bbm 1 \\ 0\ebm.$$
So $\calK(\Sigma) \cap \inte(\calY)\neq \emptyset$, however $\calK(\Sigma) + \calY\neq \mathbb{R}^2$. Moreover, we have
$$X(F) = [-1,1]\times [-1,1] \quad \text{and} \quad R(F) = [-2,2]\times [-2,2]$$
hence $(\Sigma,\calY)$ is reachable. However, Theorem~\ref{t:4sumfullthm2} fails to characterize this situation, as condition \eqref{i:4sft233} does not hold. Indeed, we have $\calY^\mathrm{b} = \mathbb{R}^2$ and for $\lambda =1$, we get
$$0 \neq \bbm 1\\ 1\ebm \in \bbm A^T- I_2 \\ B^T\ebm\inv \im \bbm C^T \\ D^T\ebm.$$
\end{example}

What is really desirable is to find some extra conditions which will enforce the equivalence of the reachability of $F$ and the condition $R(F)= \mathbb{R}^n$. For an arbitrary convex set-valued mapping $H:\mathbb{R}^n\rightrightarrows \mathbb{R}^n$, if $H$ is onto, reachable and if $X(H)$ is finitely determined, we obviously have $R(H) = \mathbb{R}^n$. The interesting fact is that for the set-valued mapping $F$, the converse also holds under mild assumptions.

\begin{prop}
Suppose that $\mathcal{K}(\Sigma)\cap \inte(\mathcal{Y})\neq \emptyset$ and $\mathcal{Y}$ is closed. Then, $X(F)$ is finitely determined. Moreover, $R(F) = \mathbb{R}^n$ if and only if $F$ is onto and reachable.  
\end{prop}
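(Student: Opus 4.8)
The plan is to reduce the stated equivalence almost entirely to the claim that $X(F)$ is finitely determined, and then to concentrate the real work there. The two implications of the equivalence split unevenly. If $R(F)=\R^n$, then reachability $X(F)\subseteq R(F)$ holds vacuously, and $F$ is onto because every $y\in\R^n$ lies in some $R_\ell(F)=F^\ell(0)$, hence is the last point of a chain $0=w_0,w_1,\dots,w_\ell=y$ with $w_{i+1}\in F(w_i)$, so $y\in F(w_{\ell-1})\subseteq\im F$. For the converse, suppose $F$ is onto and reachable and that $X(F)=X_\ell(F)$ for some $\ell$ (the finite-determination claim). Given $y\in\R^n$, applying ontoness $\ell$ times backwards produces a chain $z=z_0,z_1,\dots,z_\ell=y$ with $z_{i+1}\in F(z_i)$; then $z_0$ admits an $\ell$-step feasible trajectory, so $z_0\in X_\ell(F)=X(F)\subseteq R(F)$, say $z_0\in F^j(0)$, and therefore $y\in F^\ell(z_0)\subseteq F^{\ell+j}(0)\subseteq R(F)$. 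Hence $R(F)=\R^n$. So, modulo finite determination, the proposition is settled.

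Everything therefore rests on showing that $X(F)$ is finitely determined, which I would phrase as \emph{stabilization} of the family $X_\ell(F)$. The point is that it suffices to find an $N$ with $X_N(F)=X_{N+1}(F)$: using the backward recursion below, the equality $X_N(F)=X_{N+1}(F)$ says that every $x\in X_N(F)$ can take one feasible step back into $X_N(F)$, so $X_N(F)$ is weakly-$F$-invariant; since $X(F)$ is the largest weakly-$F$-invariant set and $X(F)\subseteq\bigcap_\ell X_\ell(F)\subseteq X_N(F)$ always holds, we conclude $X(F)=X_N(F)$. (Moreover, once $X_N(F)=X_{N+1}(F)$ the equality propagates, because $X_{\ell+1}(F)$ is obtained from $X_\ell(F)$ by a fixed operation.)

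To analyze the sets I would use the viability recursion $X_0(F)=\R^n$ and $X_\ell(F)=\{x\mid \exists u,\ Cx+Du\in\calY,\ Ax+Bu\in X_{\ell-1}(F)\}$, so that $X_\ell(F)$ is the projection onto $x$ of a convex set built from $\calY$ and $X_{\ell-1}(F)$. Two facts drive the argument. First, closedness: the inner-point hypothesis $\calK(\Sigma)\cap\inte(\calY)\neq\emptyset$, which (as in the preceding remark) yields $0\in\inte(\mathcal{T}^*(\Sigma)+\dom F)$, lets me invoke Rockafellar's recession criteria for the closedness of sums and projections, so each $X_\ell(F)$ is closed and convex. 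Second, recession cones: using this closedness, taking recession cones commutes with the recursion, giving $0^+X_\ell(F)=X_\ell(F_{\mathrm{rec}})$; since $F_{\mathrm{rec}}$ is a convex process with the conic constraint $0^+\calY$, the process machinery of \cite{KaCa:15a} applies and $X_\ell(F_{\mathrm{rec}})$ stabilizes at a finite level. Thus the recession cones of the $X_\ell(F)$ are eventually constant.

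The main obstacle is that equal recession cones do not by themselves force a family of closed convex sets to stabilize — this is exactly the pathology behind the general convex set-valued counterexample recorded in the remark just above, so the special form of $F$ must be used here. To close the gap I would exploit the linear structure together with the inner-point condition by working transversally to $\mathcal{T}^*(\Sigma)$: the directions inside $\mathcal{T}^*(\Sigma)$ carry free feasible motion (because $0\in\calY$ and $\mathcal{T}^*(\Sigma)$ is strongly reachable), while $0\in\inte(\mathcal{T}^*(\Sigma)+\dom F)$ turns the induced iteration on the finite-dimensional quotient $\R^n/\mathcal{T}^*(\Sigma)$ into a constrained feasibility computation that terminates in finitely many steps, in the same spirit as the weakly unobservable subspace recursion of geometric control stabilizing within $n$ steps. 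Combining the stabilized recession cones with the stabilization of this transverse part yields $X_N(F)=X_{N+1}(F)$ for some $N$, which gives finite determination and hence the proposition; this quotient/transverse stabilization is the step I expect to require the most care.
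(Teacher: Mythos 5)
The paper states this proposition \emph{without proof} (it is skipped in the Proofs section), so there is no official argument to compare against; I can only judge your proposal on its own terms. Its outer layer is fine: the reduction of the equivalence to finite determination is exactly what the paper calls ``obvious'' ($R(F)=\R^n$ trivially yields ontoness and reachability; conversely, onto $+$ reachable $+$ $X(F)=X_\ell(F)$ yields $R(F)=\R^n$ by your backward-chaining argument), and your observation that a single equality $X_N(F)=X_{N+1}(F)$ forces $X(F)=X_N(F)$ via weak invariance is the same mechanism the paper uses to prove Lemma~\ref{4sumfullauxthm}.\ref{i:4sfat1} under the stronger hypothesis $\calK(\Sigma)+\calY=\R^s$.

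The genuine gap is the step you yourself flag as needing the most care --- the ``quotient/transverse stabilization'' --- and it cannot be repaired, because finite determination is actually \emph{false} under the stated hypotheses. Take $n=m=1$, $s=2$, $A=0$, $B=1$, $C=\bbm 1\\ 0\ebm$, $D=\bbm 0\\ 1\ebm$, and $\calY=\{(a,b)\mid |a|\le 1,\ b\ge 2|a|\}$, which is closed, contains the origin, and is even hyperbolic. Then $\bbm C & D\ebm=I_2$, so Assumption~\ref{4asmp} holds; $\ker D=\{0\}$ gives $\calT^*(\Sigma)=B\ker D=\{0\}$, hence $\calK(\Sigma)=\im D$, which meets $\inte(\calY)$ at $(0,1)$. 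But by \eqref{e:4DefnGrF} we get $\gr(F)=\calY=\{(x,y)\mid |x|\le 1,\ y\ge 2|x|\}$ --- precisely the counterexample mapping $H$ in the remark a few lines above the proposition --- and one computes $X_\ell(F)=[-2^{1-\ell},\,2^{1-\ell}]$, which decreases strictly to $X(F)=\{0\}$, so $X(F)$ is not finitely determined. This also shows why your proposed mechanism has no force exactly where it is needed: here $\calT^*(\Sigma)=\{0\}$, so the quotient $\R^n/\calT^*(\Sigma)$ is $\R^n$ itself, the ``transverse iteration'' is the original non-stabilizing one, and the inner-point condition $0\in\inte(\calT^*(\Sigma)+\dom F)$ demonstrably does not make it terminate. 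Your architecture does go through if the hypothesis is strengthened to $\calK(\Sigma)+\calY=\R^s$: then finite determination is exactly Lemma~\ref{4sumfullauxthm}.\ref{i:4sfat1}, whose proof appends a $\calT^*(\Sigma)$-trajectory to prolong any $n$-step feasible trajectory by one step --- an argument that genuinely needs $Cx_n\in\calK(\Sigma)+\calY$ for \emph{arbitrary} $x_n$, which is what fails in the example above. (Whether the equivalence ``$R(F)=\R^n$ iff $F$ onto and reachable'' survives without finite determination is left open by your argument; the example does not decide it, since there $F$ fails to be onto.)
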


It turns out that we need to impose the stronger condition $\mathcal{K}(\Sigma)+ \mathcal{Y}=\R^s$ in order to obtain necessity of \eqref{e:4rfrn}. To elaborate on this point, we need the following auxiliary result.

\begin{lemma}\label{4sumfullauxthm}
Suppose that $\mathcal{K}(\Sigma)+\mathcal{Y}=\mathbb{R}^s$. Then, the following statements hold:
\begin{enumerate}
 \item \label{i:4sfat1}$X(F) = X_n(F)$. In particular, $X(F)$ is finitely determined.
 \item \label{i:4sfat2}$-R_\ell(F^\circ) = [X_\ell(F)]^\circ$ for all $l\geq 1$, and $R(F^\circ)=R_n(F^\circ)$. Hence, $-R(F^\circ)=X(F)^\circ$.
 \item \label{i:4sfat3}$\conv[X(F) \cup \mathcal{T}^*(\Sigma)] = \mathbb{R}^n$. Equivalently, $$R(F^\circ) \cap \mathcal{V}^*(\Sigma^T)=\{0\}.$$
\end{enumerate}
\end{lemma}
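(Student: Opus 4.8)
The three items hinge on a single geometric translation of the hypothesis plus one finite‑horizon duality lemma. First I would record that $\mathcal{K}(\Sigma)+\mathcal{Y}=\mathbb{R}^s$ is equivalent to
$$\mathcal{T}^*(\Sigma)+\dom F=\mathbb{R}^n .$$
Indeed, for a fixed $x$ the relation $x-t\in\dom F=C^{-1}(\mathcal{Y}+\im D)$ for some $t\in\mathcal{T}^*(\Sigma)$ means $Cx\in\mathcal{Y}+\im D+C\mathcal{T}^*(\Sigma)=\mathcal{Y}+\mathcal{K}(\Sigma)$; as the right–hand side is all of $\mathbb{R}^s$, this holds for every $x$, which is exactly $\mathcal{T}^*(\Sigma)+\dom F=\mathbb{R}^n$. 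This is the convex analogue of the conic identity quoted in the remarks, and it is the only place the standing hypothesis enters items \ref{i:4sfat1} and \ref{i:4sfat3}.

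\textbf{Item \ref{i:4sfat1}.} Since $X_\ell(F)=F^{-1}(X_{\ell-1}(F))$ and feasibility for $\ell+1$ steps trivially implies feasibility for $\ell$ steps (so $X_{\ell+1}(F)\subseteq X_\ell(F)$), it suffices to prove the reverse inclusion $X_\ell(F)\subseteq X_{\ell+1}(F)$ for all $\ell\geq n$: the sequence is then constant from $\ell=n$, and the fixed‑point relation $X_n(F)=X_{n+1}(F)=\{x\mid F(x)\cap X_n(F)\neq\emptyset\}$ lets one select, from any $x\in X_n(F)$, a successor inside $X_n(F)$ and iterate, yielding an infinite feasible trajectory; hence $X_n(F)\subseteq X(F)\subseteq X_n(F)$. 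To get $X_\ell(F)\subseteq X_{\ell+1}(F)$ for $\ell\geq n$, take $x\in X_\ell(F)$ with feasible data $x_0=x,\dots,x_\ell$ and outputs $y_0,\dots,y_{\ell-1}\in\mathcal{Y}$. By $\mathcal{T}^*(\Sigma)+\dom F=\mathbb{R}^n$ choose $\tau\in\mathcal{T}^*(\Sigma)$ with $x_\ell+\tau\in\dom F$. Because $\mathcal{T}^*(\Sigma)$ is reachable from the origin in $n$ steps with identically zero output (\cite{Trent:01}), I would superimpose such a zero‑output trajectory over the final $n$ steps; by linearity this leaves $x_0,\dots,x_{\ell-n}$ and all the running outputs unchanged (the injected output increments are zero, so the outputs stay in $\mathcal{Y}$) and moves the endpoint to $x_\ell+\tau\in\dom F$. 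The corrected trajectory is feasible for $\ell+1$ steps and still starts at $x$, so $x\in X_{\ell+1}(F)$.

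\textbf{Item \ref{i:4sfat2}.} The engine is the one‑step polarity identity $[F^{-1}(S)]^\circ=-F^\circ(-S^\circ)$ for convex $S$, which I would derive from $\gr(F^\circ)=\left[\begin{smallmatrix}0&I_n\\-I_n&0\end{smallmatrix}\right]\gr(F)^\circ$ together with the rules for polars of images, preimages and intersections under linear maps (\cite[Cor.~16.3.2]{Roc:70}); the constraint qualification needed to pass polarity through the projection and to avoid spurious closures is supplied by Assumption~\ref{4asmp} (the polar of a projection is the normal section of the polar, and the polar of $\gr(F)\cap(\mathbb{R}^n\times S)$ is the sum $\gr(F)^\circ+(\{0\}\times S^\circ)$). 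A check on an invertible linear $F=M$, where $F^\circ=M^T$ and $[M^{-1}S]^\circ=M^TS^\circ$, fixes the signs. Granting this, an induction on $\ell$ with conventions $X_0(F)=\mathbb{R}^n$, $R_0(F^\circ)=\{0\}$ gives
$$X_\ell(F)^\circ=[F^{-1}(X_{\ell-1}(F))]^\circ=-F^\circ\!\big(-X_{\ell-1}(F)^\circ\big)=-F^\circ\!\big(R_{\ell-1}(F^\circ)\big)=-R_\ell(F^\circ),$$
the first assertion. By item \ref{i:4sfat1} the decreasing chain $\{X_\ell(F)\}$ is stationary from $\ell=n$; polarising, the increasing chain $\{R_\ell(F^\circ)\}$ is stationary from $n$, whence $R(F^\circ)=R_n(F^\circ)$ and $-R(F^\circ)=X_n(F)^\circ=X(F)^\circ$.

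\textbf{Item \ref{i:4sfat3}.} The two formulations are interchangeable: polarity sends the convex hull of a union to the intersection of polars, so $[\conv(X(F)\cup\mathcal{T}^*(\Sigma))]^\circ=X(F)^\circ\cap\mathcal{T}^*(\Sigma)^\perp$, which by item \ref{i:4sfat2} equals $-R(F^\circ)\cap\mathcal{T}^*(\Sigma)^\perp$; since $\mathcal{T}^*(\Sigma)^\perp=\mathcal{V}^*(\Sigma^T)$ (\cite{Trent:01}) is a subspace, this intersection is trivial precisely when $R(F^\circ)\cap\mathcal{V}^*(\Sigma^T)=\{0\}$. It remains to prove $\conv(X(F)\cup\mathcal{T}^*(\Sigma))=\mathbb{R}^n$; as $\clo\conv(X(F)\cup\mathcal{T}^*(\Sigma))$ is a closed convex set containing the subspace $\mathcal{T}^*(\Sigma)$ (hence absorbs it under addition), and a convex set whose closure is $\mathbb{R}^n$ equals $\mathbb{R}^n$, it suffices to establish the Minkowski‑sum statement $X(F)+\mathcal{T}^*(\Sigma)=\mathbb{R}^n$. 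Using item \ref{i:4sfat1} and the $n$‑step Markov parameters I would write $X(F)=X_n(F)=\{x\mid\mathcal{O}_nx\in\mathcal{Y}^{\,n}+\im\mathcal{M}_n\}$, with $\mathcal{O}_n$ the observability matrix, $\mathcal{M}_n$ the lower‑triangular Toeplitz matrix, and $\mathcal{Y}^{\,n}=\mathcal{Y}\times\cdots\times\mathcal{Y}$. Then $X_n(F)+\mathcal{T}^*(\Sigma)=\mathbb{R}^n$ is equivalent to the subspace covering $\im\mathcal{O}_n\subseteq\mathcal{Y}^{\,n}+\im\mathcal{M}_n+\mathcal{O}_n\mathcal{T}^*(\Sigma)$, which would follow from $\mathcal{Y}^{\,n}+\im\mathcal{M}_n+\mathcal{O}_n\mathcal{T}^*(\Sigma)\supseteq(\mathcal{Y}+\mathcal{K}(\Sigma))^{n}=\mathbb{R}^{sn}$. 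I expect the inclusion $\im\mathcal{M}_n+\mathcal{O}_n\mathcal{T}^*(\Sigma)\supseteq\mathcal{K}(\Sigma)^{n}$ to be the main obstacle: it asserts that over $n$ steps the inputs and the strongly reachable initial states realise the full subspace $\mathcal{K}(\Sigma)=\im D+C\mathcal{T}^*(\Sigma)$ at every instant. Making this precise needs the triangular block structure of $\mathcal{M}_n$ and the defining reachability property of $\mathcal{T}^*(\Sigma)$ from \cite{Trent:01}, and it is the genuinely technical core of the lemma.
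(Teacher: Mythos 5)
Your item \ref{i:4sfat1} is correct and is essentially the paper's own proof: both arguments convert the hypothesis into the existence of a corrector $\bar x\in\calT^*(\Sigma)$ with $x_n+\bar x\in\dom F$, and superimpose an output-nulling trajectory from $0$ to $\bar x$ onto the given feasible trajectory.

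Item \ref{i:4sfat2} is where the proposal genuinely breaks, and the gap is not a repairable slip. Your engine, $[F^{-1}(S)]^\circ=-F^\circ(-S^\circ)$, rests on the rule that the polar of $\gr(F)\cap(\mathbb{R}^n\times S)$ is the Minkowski sum $\gr(F)^\circ+(\{0\}\times S^\circ)$. For convex sets that are not cones this rule is false: the polar of an intersection is the \emph{closed convex hull of the union} of the polars (\cite[Cor. 16.5.2]{Roc:70}), and this agrees with the sum only in the conic case. Your sanity check on an invertible linear map cannot detect the error, because the graph of a linear map is a subspace, hence a cone. Moreover, no alternative derivation can close the gap, since the identity --- and in fact the equality asserted in item \ref{i:4sfat2} itself --- fails on a concrete example. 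Take $n=2$, $m=s=1$,
$$A=\begin{bmatrix}0&0\\1&0\end{bmatrix},\qquad B=\begin{bmatrix}1\\0\end{bmatrix},\qquad C=\begin{bmatrix}0&1\end{bmatrix},\qquad D=0,\qquad \calY=[-1,1].$$
Every state is reached from the origin in two steps while both outputs are still zero, so $\calT^*(\Sigma)=\mathbb{R}^2$, hence $\calK(\Sigma)=\mathbb{R}^s$; Assumption~\ref{4asmp} and $\calK(\Sigma)+\calY=\mathbb{R}^s$ hold. A two-step feasible trajectory from $x$ has outputs $y_0=x_2$, $y_1=x_1$, so $X_2(F)=[-1,1]^2$ and $[X_2(F)]^\circ=\{p\in\mathbb{R}^2:|p_1|+|p_2|\leq 1\}$. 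On the other hand, \eqref{e:FpolSigmaT.1} gives $F^\circ(q)=\{(q_2,v):|v|\leq 1\}$ if $q_1=0$ and $\emptyset$ otherwise, so $R_2(F^\circ)=(F^\circ)^2(0)=[-1,1]^2$, and $-R_2(F^\circ)\supsetneqq[X_2(F)]^\circ$. The mechanism is that the constant $1$ in the definition of the polar accumulates over the horizon: pairing a dual chain generated by multipliers $v_j\in-\calY^\circ$ against a feasible trajectory with outputs $y_i\in\calY$ gives $\langle q_\ell,x_0\rangle=\sum_j\langle v_j,y_{\ell-1-j}\rangle\geq-\ell$ rather than $\geq-1$; only for conic $\calY$, where each pairing term is nonnegative, is the bound uniform in $\ell$, which is exactly why this step was sound in \cite{KaCa:15a}. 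Under the standing hypothesis only the inclusion $[X_\ell(F)]^\circ\subseteq-R_\ell(F^\circ)$ survives. (Be aware that your blind reconstruction has surfaced a real issue: the paper's own proof of this item equates $\Gamma_\ell^T[\ker\Theta_\ell^T\cap(\calY^\circ)^\ell]$ with $\Gamma_\ell^T[\ker\Theta_\ell^T\cap(\calY^\ell)^\circ]$, i.e.\ it implicitly identifies the product of polars with the polar of the product; these coincide for cones but differ in the example above.)

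In item \ref{i:4sfat3}, your reduction to the Minkowski-sum statement $X(F)+\calT^*(\Sigma)=\mathbb{R}^n$ is sound (a closed convex set containing a subspace absorbs it under addition, and a convex set whose closure is $\mathbb{R}^n$ equals $\mathbb{R}^n$), but the proof then stops at what you call the technical core: the covering inclusion $\im\mathcal{M}_n+\mathcal{O}_n\calT^*(\Sigma)\supseteq\calK(\Sigma)^n$ is never established, and your equivalence between the two formulations of the item invokes the defective item \ref{i:4sfat2}. The paper proceeds in the opposite, dual direction and needs no covering argument: it proves $R(F^\circ)\cap\calV^*(\Sigma^T)=\{0\}$ outright, by combining $F^\circ(0)\cap\calV^*(\Sigma^T)=\{0\}$ (Lemma~\ref{l:4SLem}.\ref{i:4SLem2}, where the hypothesis $\calK(\Sigma)+\calY=\mathbb{R}^s$ enters) with the invariance $(F^\circ)^{-1}\calV^*(\Sigma^T)\subseteq\calV^*(\Sigma^T)$ (Lemma~\ref{l:4SLem}.\ref{i:4SLem3}) to propagate triviality along dual chains, and only then passes to the primal statement by polarity --- a passage for which the surviving inclusion $X(F)^\circ\subseteq-R(F^\circ)$ suffices, so that the primal claim $\conv[X(F)\cup\calT^*(\Sigma)]=\mathbb{R}^n$ stands even though the equality in item \ref{i:4sfat2} does not.
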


The biggest merit of Lemma~\ref{4sumfullauxthm} is that \eqref{e:4rfrn} is now a necessary condition for reachability. By Lemma~\ref{4sumfullauxthm}.\ref{i:4sfat3}, $X(F)\subseteq R(F)$ implies $\mathbb{R}^n = \conv[X(F) \cup \mathcal{T}^*(\Sigma)] \subseteq R(F)$. Therefore, we get the following result.

\begin{theorem}
 Suppose that $\mathcal{K}(\Sigma)+\mathcal{Y}=\mathbb{R}^s$. Then the following conditions are equivalent
 \begin{enumerate}[i.]
  \item $(\Sigma,\mathcal{Y})$ is reachable.
  \item $R(F)= \mathbb{R}^n$.
  \item $X(F^\circ) = \{0\}$.
 \end{enumerate}
\end{theorem}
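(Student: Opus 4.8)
The plan is to prove the cycle (ii)$\Rightarrow$(i)$\Rightarrow$(ii), which settles the first two statements, and then to deduce the third from the second by a direct appeal to the results of Case~\ref{i:4case1}. The first thing I would record is that the standing hypothesis $\mathcal{K}(\Sigma)+\mathcal{Y}=\mathbb{R}^s$ is genuinely stronger than the Case~\ref{i:4case1} hypothesis, so that the Case~\ref{i:4case1} machinery is applicable. Indeed, let $\pi$ be the (open, linear, surjective) projection onto $\mathbb{R}^s/\mathcal{K}(\Sigma)$. Surjectivity of $\mathcal{K}(\Sigma)+\mathcal{Y}$ says $\pi(\mathcal{Y})=\mathbb{R}^s/\mathcal{K}(\Sigma)$; since $\mathcal{Y}$ is solid, $\pi(\inte(\mathcal{Y}))$ is a nonempty open convex set that is dense in $\pi(\mathcal{Y})=\mathbb{R}^s/\mathcal{K}(\Sigma)$, and an open convex dense subset of a finite-dimensional space is the whole space. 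Hence $0\in\pi(\inte(\mathcal{Y}))$, i.e.\ some interior point of $\mathcal{Y}$ lies in $\mathcal{K}(\Sigma)$, giving $\mathcal{K}(\Sigma)\cap\inte(\mathcal{Y})\neq\emptyset$. Consequently both Theorem~\ref{t:4sumfullauxthm2} and Corollary~\ref{t:4sumfullthm} are available, and Corollary~\ref{t:4sumfullthm} yields at once that $R(F)=\mathbb{R}^n$ if and only if $X(F^\circ)=\{0\}$, which is precisely the equivalence of (ii) and (iii).

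It remains to link (i) and (ii). Recall from the reformulation that $(\Sigma,\mathcal{Y})$ is reachable exactly when $X(F)\subseteq R(F)$. The implication (ii)$\Rightarrow$(i) is immediate, as $R(F)=\mathbb{R}^n$ trivially contains $X(F)$. For the converse (i)$\Rightarrow$(ii) I would argue as follows. The set $R(F)=\bigcup_{\ell\geq 1}F^\ell(0)$ is convex, being an increasing union of the convex sets $F^\ell(0)$, and it contains $\mathcal{T}^*(\Sigma)$: every point of the strongly reachable subspace is reachable from the origin along a trajectory whose output is identically zero, and since $0\in\mathcal{Y}$ such a trajectory is automatically feasible for $(\Sigma,\mathcal{Y})$. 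Assuming reachability, $X(F)\subseteq R(F)$, so $R(F)$ is a convex set containing both $X(F)$ and $\mathcal{T}^*(\Sigma)$, whence $\conv[X(F)\cup\mathcal{T}^*(\Sigma)]\subseteq R(F)$. By Lemma~\ref{4sumfullauxthm}.\ref{i:4sfat3} the left-hand side equals $\mathbb{R}^n$, and therefore $R(F)=\mathbb{R}^n$, establishing (ii). Combining the three parts closes the loop and proves that (i), (ii) and (iii) are equivalent.

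The conceptual weight of the theorem is already carried by Lemma~\ref{4sumfullauxthm}.\ref{i:4sfat3}, so the proof itself is short; the points I would scrutinise are the two supporting facts in (i)$\Rightarrow$(ii), namely convexity of $R(F)$ (routine, from the nesting $R_\ell(F)\subseteq R_{\ell+1}(F)$) and the inclusion $\mathcal{T}^*(\Sigma)\subseteq R(F)$. The latter is the main place where care is needed: it rests on the standard discrete-time characterisation of the strongly reachable subspace as the set of states reachable from the origin with identically zero output, combined with $0\in\mathcal{Y}$; if one prefers not to re-derive it, it can be cited from \cite{Trent:01} or \cite{KaCa:15a}. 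Thus the only real obstacle outside the already-assumed lemmas is confirming that $\mathcal{K}(\Sigma)+\mathcal{Y}=\mathbb{R}^s$ implies the Case~\ref{i:4case1} hypothesis, so that Corollary~\ref{t:4sumfullthm} may be invoked, and checking that Lemma~\ref{4sumfullauxthm}---whose proof does the genuine analytic work---is indeed stated under exactly this stronger hypothesis.
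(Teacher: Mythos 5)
Your proposal is correct and takes essentially the same route as the paper: (i)$\Rightarrow$(ii) via Lemma~\ref{4sumfullauxthm}.\ref{i:4sfat3} combined with convexity of $R(F)$ and the inclusion $\mathcal{T}^*(\Sigma)\subseteq R(F)$, the trivial converse, and (ii)$\Leftrightarrow$(iii) from Corollary~\ref{t:4sumfullthm}. Your explicit quotient-space verification that $\mathcal{K}(\Sigma)+\mathcal{Y}=\mathbb{R}^s$ implies $\mathcal{K}(\Sigma)\cap\inte(\mathcal{Y})\neq\emptyset$ is a useful addition, since the paper relies on this implication only implicitly when it invokes the Case~\ref{i:4case1} machinery under the stronger hypothesis.
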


Combining the above theorem with Theorem \ref{t:4sumfullthm2}, we obtain a spectral characterization for reachability of $(\Sigma,\calY)$.

The next section is devoted to the case where the subspace $\mathcal{K}(\Sigma)$ and the output constraint set have a trivial intersection.

\subsection{Case \ref{i:4case2}: $\mathcal{K}(\Sigma)\cap\mathcal{Y}=\{0\}$}

Similar to the conic constraint set, the reachable set of states $R(F)$ admits a simple characterization for this case.

\begin{theorem}\label{t:RFTstar}
Suppose that $\mathcal{K}(\Sigma)\cap\mathcal{Y}=\{0\}$.
Then $R(F) = \mathcal{T}^*(\Sigma)$.
\end{theorem}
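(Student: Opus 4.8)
The plan is to prove the two inclusions separately, relying on the standard characterization of the strongly reachable subspace as the set of states reachable from the origin along trajectories whose output is held identically at zero; that is, $x\in\calT^*(\Sigma)$ if and only if there exist $\ell\geq 0$ and sequences $\{u_j\}_{0\leq j\leq \ell-1}$, $\{x_j\}_{0\leq j\leq \ell}$ with $x_0=0$, $x_\ell=x$, $x_{j+1}=Ax_j+Bu_j$ and $Cx_j+Du_j=0$ for $0\leq j\leq \ell-1$. Two facts about $\calT^*(\Sigma)$ drawn from \cite{Trent:01} will do the work: first, this operational description itself; second, the invariance property that if $x\in\calT^*(\Sigma)$ and $Cx+Du=0$ for some $u$, then $Ax+Bu\in\calT^*(\Sigma)$, which is immediate from the description by appending one more zero-output step.

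For the inclusion $\calT^*(\Sigma)\subseteq R(F)$ I would argue directly. Given $x\in\calT^*(\Sigma)$, take a zero-output trajectory from $0$ to $x$ as above. Since $\calY$ contains the origin, each output value $Cx_j+Du_j=0$ lies in $\calY$, so the very same trajectory is admissible for the constrained system; hence $x\in R_\ell(F)\subseteq R(F)$. This direction uses only $0\in\calY$ and makes no use of the hypothesis $\calK(\Sigma)\cap\calY=\{0\}$.

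The reverse inclusion $R(F)\subseteq\calT^*(\Sigma)$ is where the hypothesis enters, and I would prove $R_\ell(F)\subseteq\calT^*(\Sigma)$ for every $\ell\geq 1$ by induction. The crux is that along any $F$-trajectory starting at the origin the output is forced to vanish: if $x_k\in\calT^*(\Sigma)$ and $x_{k+1}\in F(x_k)$ is realized by an input $u_k$ with $y_k=Cx_k+Du_k\in\calY$, then $Cx_k\in C\calT^*(\Sigma)$ and $Du_k\in\im D$, so $y_k\in\im D+C\calT^*(\Sigma)=\calK(\Sigma)$; combined with $y_k\in\calY$ and the hypothesis $\calK(\Sigma)\cap\calY=\{0\}$ this gives $y_k=0$. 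The base case $R_1(F)\subseteq\calT^*(\Sigma)$ is the same computation with $x_0=0$. For the inductive step, $x_{k+1}\in F(x_k)$ with $x_k\in\calT^*(\Sigma)$ now satisfies $Cx_k+Du_k=0$, so the invariance property gives $x_{k+1}=Ax_k+Bu_k\in\calT^*(\Sigma)$. Taking the union over $\ell$ yields $R(F)\subseteq\calT^*(\Sigma)$, which together with the first inclusion completes the proof.

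I would expect the only delicate point to be the bookkeeping around the characterization of $\calT^*(\Sigma)$ — specifically, matching the index convention of $R_\ell(F)=F^\ell(0)$, which constrains the outputs $y_0,\dots,y_{\ell-1}$ but leaves the output at the terminal state free, with the zero-output reachability description of $\calT^*(\Sigma)$, and invoking the invariance property in the correct form. Once these geometric-control facts are pinned down, the argument is essentially identical to the conic case treated in \cite{KaCa:15a}, the only properties of $\calY$ used being $0\in\calY$ and $\calK(\Sigma)\cap\calY=\{0\}$.
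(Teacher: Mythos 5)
Your proof is correct and takes essentially the same route as the paper: the inclusion $\calT^*(\Sigma)\subseteq R(F)$ follows because zero-output trajectories are admissible (using only $0\in\calY$), and the reverse inclusion follows from the observation that along any constrained trajectory from the origin the output lies in $\calK(\Sigma)\cap\calY=\{0\}$, which keeps the state inside $\calT^*(\Sigma)$. The paper compresses this induction into the single assertion ``the hypothesis imposes $x_k\in\calT^*(\Sigma)$ for all $k$''; your write-up simply makes explicit the base case, the vanishing-output step, and the invariance property of $\calT^*(\Sigma)$ that the paper leaves implicit.
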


Based on this characterization, we state necessary and sufficient conditions for reachability in the following theorem.

\begin{theorem}\label{t:Zint}
Suppose that $\mathcal{K}(\Sigma)\cap\mathcal{Y}=\{0\}$. Then the following statements are equivalent:
\begin{enumerate}[i.]
 \item \label{4zintt:1}The system $(\Sigma,\mathcal{Y})$ is reachable.
 \item \label{4zintt:2}$X(F)=\mathcal{V}^*(\Sigma)=\mathcal{R}^*(\Sigma)$.
\end{enumerate}
\end{theorem}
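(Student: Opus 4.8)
The plan is to prove both implications using Theorem~\ref{t:RFTstar}, which identifies $R(F) = \mathcal{T}^*(\Sigma)$, together with two elementary inclusions linking $X(F)$ and the weakly unobservable subspace. Throughout I would use that $(\Sigma,\mathcal{Y})$ is reachable exactly when $X(F)\subseteq R(F)$, and the description $y\in F(x)$ iff $y=Ax+Bu$ with $Cx+Du\in\mathcal{Y}$ for some $u$.

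First I would record the easy inclusion $\mathcal{V}^*(\Sigma)\subseteq X(F)$: any $x\in\mathcal{V}^*(\Sigma)$ admits an input driving the output identically to zero, and since $0\in\mathcal{Y}$ this trajectory is feasible, so $x\in X(F)$. The implication \eqref{4zintt:2}$\Rightarrow$\eqref{4zintt:1} is then immediate: assuming $X(F)=\mathcal{V}^*(\Sigma)=\mathcal{R}^*(\Sigma)$ and using $\mathcal{R}^*(\Sigma)=\mathcal{V}^*(\Sigma)\cap\mathcal{T}^*(\Sigma)\subseteq\mathcal{T}^*(\Sigma)=R(F)$ by Theorem~\ref{t:RFTstar}, we get $X(F)\subseteq R(F)$, i.e. reachability.

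The substance lies in \eqref{4zintt:1}$\Rightarrow$\eqref{4zintt:2}, and the key step, which I expect to be the only nontrivial point, is to prove $X(F)\subseteq\mathcal{V}^*(\Sigma)$. Assuming reachability, Theorem~\ref{t:RFTstar} gives $X(F)\subseteq R(F)=\mathcal{T}^*(\Sigma)$. Now fix $x\in X(F)$. Since $X(F)$ is the largest weakly-$F$-invariant set, there is $u$ with $Ax+Bu\in X(F)$ and output $y=Cx+Du\in\mathcal{Y}$. Because $x\in\mathcal{T}^*(\Sigma)$ we have $Cx\in C\mathcal{T}^*(\Sigma)$ and $Du\in\im D$, so $y\in\im D+C\mathcal{T}^*(\Sigma)=\mathcal{K}(\Sigma)$; combined with $y\in\mathcal{Y}$ and the standing hypothesis $\mathcal{K}(\Sigma)\cap\mathcal{Y}=\{0\}$, this forces $y=0$. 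Thus from every state of $X(F)$ one can take a step with zero output landing again in $X(F)\subseteq\mathcal{T}^*(\Sigma)$; iterating produces an input rendering the output identically zero, whence $x\in\mathcal{V}^*(\Sigma)$.

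Combining the two inclusions yields $X(F)=\mathcal{V}^*(\Sigma)$. Finally, since $X(F)=\mathcal{V}^*(\Sigma)\subseteq\mathcal{T}^*(\Sigma)$, the intersection $\mathcal{V}^*(\Sigma)\cap\mathcal{T}^*(\Sigma)$ collapses to $\mathcal{V}^*(\Sigma)$, giving $\mathcal{R}^*(\Sigma)=\mathcal{V}^*(\Sigma)=X(F)$, which is precisely \eqref{4zintt:2}. The main obstacle is verifying that the weak-$F$-invariance argument genuinely produces a full infinite zero-output trajectory rather than just a single step; this works because the one-step conclusion $y=0$ holds uniformly at every state of $X(F)$, each successor again lying in $X(F)\subseteq\mathcal{T}^*(\Sigma)$, so the zero-output property propagates indefinitely and certifies membership in $\mathcal{V}^*(\Sigma)$.
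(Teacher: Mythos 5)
Your proof is correct and follows essentially the same route as the paper's: both implications rest on Theorem~\ref{t:RFTstar}, the elementary inclusion $\mathcal{V}^*(\Sigma)\subseteq X(F)$, and the observation that $\mathcal{K}(\Sigma)\cap\mathcal{Y}=\{0\}$ forces every output along a feasible trajectory starting in $X(F)\subseteq R(F)=\mathcal{T}^*(\Sigma)$ to vanish. Your only (harmless) variation is building the zero-output trajectory step-by-step via weak-$F$-invariance of $X(F)$, whereas the paper takes an arbitrary feasible trajectory and leaves implicit that its states remain in $X(F)\subseteq\mathcal{T}^*(\Sigma)$; your version simply spells out that detail.
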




\section{Proofs}
Before we present the proofs, we would like to prove a few auxiliary results, which will turn out to be very useful in the proofs of the main theorems.

Let $\Lambda_1= B$, $\Gamma_1 = C$ and $\Theta_1= D$, and for all $\ell \geq 2$ define the following matrices recursively
\begin{equation}\label{4galathe}
 \Gamma_{\ell+1} = \begin{bmatrix}
                    \Gamma_\ell A \\ C 
                   \end{bmatrix},
 \quad
 \Lambda_{\ell+1} = \begin{bmatrix}
                     A \Lambda_\ell & B
                    \end{bmatrix},
 \quad
 \Theta_{\ell+1} = \begin{bmatrix}
                    C\Lambda_\ell & D\\
                    \Theta_\ell & 0
                   \end{bmatrix}.
\end{equation}
Note that using these matrices, for the strongly reachable subspace of $\Sigma$ and weakly unobservable subspace of $\Sigma^T$, one could write
$$
 \mathcal{T}^*(\Sigma) = \Lambda_n\ker \Theta_n\text{ and }  \mathcal{V}^*(\Sigma^T) = (\Lambda_n^T)^{-1}\im \Theta_n^T.
$$
The next lemma collects technical auxiliary results that will be employed in the proofs of the main results. 
\begin{lemma}\label{l:4SLem}
Suppose that $\mathcal{K}(\Sigma)\cap \inte(\mathcal{Y})\neq \emptyset$. Then, the following statements hold:
\begin{enumerate}
 \item \label{i:4SLem3} $(F^\circ)^{-1}\calV^*(\Sigma^T) \subseteq \calV^*(\Sigma^T)$.
 \item \label{i:4SLem1} $[\im \Theta_\ell + \Gamma_\ell \mathcal{T}^*(\Sigma)]  \cap \inte(\mathcal{Y}^\ell) \neq \emptyset,$
for all $\ell \geq 1$.
 \item \label{i:4SLem2} $F^\circ(0) \cap \calV^*(\Sigma^T)$ is compact. In particular, if $\calK(\Sigma)+\calY = \mathbb{R}^s$ then $F^\circ(0) \cap \calV^*(\Sigma^T) = \{0\}$.
 \item \label{i:4SLem4} $X_n(F^\circ) \subseteq \calV^*(\Sigma^T)$.
 \item \label{i:4sfalem1}$(F^\circ)^{-\ell}(\mathcal{V}^*(\Sigma^T))=[F^\ell(\mathcal{T}^*(\Sigma))]^\circ$ for all $\ell \geq 0$.
 \item \label{i:4sfalem2}$X(F^\circ)=\bigcap_{\ell\geq 1}(F^\circ)^{-\ell}\mathcal{V}^*(\Sigma^T)=\bigcap_{\ell\geq 1}X_\ell(F^\circ)$.
\end{enumerate}
\end{lemma}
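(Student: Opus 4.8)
The plan is to prove the six statements largely separately, exploiting throughout the dictionary between one step of $F^\circ$ and one output-nulling step of the dual system $\Sigma^T$: by \eqref{e:FpolSigmaT.1}, $p\in F^\circ(q)$ means that the input $v\in-\mathcal{Y}^\circ$ drives $q$ to $p=A^Tq+C^Tv$ in $\Sigma^T$ while producing the dual output $B^Tq+D^Tv=0$. For statement~\ref{i:4SLem3} I would take $q$ with some $p\in F^\circ(q)\cap\mathcal{V}^*(\Sigma^T)$ and simply prepend this zero-output step to an output-nulling input for $p$; the result is an output-nulling input for $q$, so $q\in\mathcal{V}^*(\Sigma^T)$. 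For statement~\ref{i:4SLem4}, a point of $X_n(F^\circ)$ carries an $n$-step $F^\circ$-trajectory, hence a state/input trajectory of $\Sigma^T$ whose output vanishes on $\{0,\dots,n-1\}$; because the weakly unobservable subspace is already determined within $n$ steps (the state dimension) — the content of $\mathcal{V}^*(\Sigma^T)=(\Lambda_n^T)^{-1}\im\Theta_n^T$ recorded before the lemma — the starting point lies in $\mathcal{V}^*(\Sigma^T)$. The constraint $v\in-\mathcal{Y}^\circ$ only shrinks the admissible inputs, so it is harmless in both arguments.

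For statement~\ref{i:4SLem2} I would use the Slater point. From \eqref{e:FpolSigmaT.1}, $F^\circ(0)=\{C^Tv:v\in-\mathcal{Y}^\circ,\ D^Tv=0\}$, and the two formulas preceding the lemma give $\mathcal{V}^*(\Sigma^T)=\mathcal{T}^*(\Sigma)^\perp$. Choose $y^*\in\mathcal{K}(\Sigma)\cap\inte(\mathcal{Y})$ and write $y^*=Dw+Ct$ with $t\in\mathcal{T}^*(\Sigma)$. If $C^Tv\in\mathcal{V}^*(\Sigma^T)$ and $D^Tv=0$ then $\langle v,y^*\rangle=\langle D^Tv,w\rangle+\langle C^Tv,t\rangle=0$; choosing $\eps$ with $y^*+\eps\mathbb{B}\subseteq\mathcal{Y}$ and using $-v\in\mathcal{Y}^\circ$ yields $\eps\|v\|\le1$, so $F^\circ(0)\cap\mathcal{V}^*(\Sigma^T)$ is bounded, and it is closed, hence compact. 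When $\mathcal{K}(\Sigma)+\mathcal{Y}=\mathbb{R}^s$ the same computation gives $v\perp\mathcal{K}(\Sigma)$, so $\langle-v,\cdot\rangle\le1$ on all of $\mathbb{R}^s$ and therefore $v=0$.

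The step I expect to be the main obstacle is statement~\ref{i:4SLem1}, together with the way it later feeds statement~\ref{i:4sfalem1}. Here $\im\Theta_\ell+\Gamma_\ell\mathcal{T}^*(\Sigma)$ is the subspace of output strings realisable over horizon $\ell$ from an initial state in $\mathcal{T}^*(\Sigma)$, and one must show it meets the open box $(\inte\mathcal{Y})^\ell$. I would induct on $\ell$, the case $\ell=1$ being the hypothesis since $\im D+C\mathcal{T}^*(\Sigma)=\mathcal{K}(\Sigma)$. For the step, assuming the intersection empty, separate the subspace from the solid box to obtain $\phi=(\phi_0,\dots,\phi_\ell)\neq0$ with $\Theta_{\ell+1}^T\phi=0$, $\Gamma_{\ell+1}^T\phi\in\mathcal{V}^*(\Sigma^T)$ and each $\phi_k$ nonnegative on $\mathcal{Y}$; the work is to unwind the recursive block shape of $\Theta_{\ell+1},\Gamma_{\ell+1}$ so as to expose a single component $\psi$ satisfying the base-case conditions $D^T\psi=0$, $C^T\psi\in\mathcal{V}^*(\Sigma^T)$, $\psi$ nonnegative on $\mathcal{Y}$, which $\ell=1$ forces to vanish; iterating collapses $\phi$ to $0$. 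An alternative worth trying is to identify $\im\Theta_\ell+\Gamma_\ell\mathcal{T}^*(\Sigma)$ with $\mathcal{K}$ of the $\ell$-fold lifted system and to lift the interior hypothesis, trading the bookkeeping for geometric identities on the lift.

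Granting statement~\ref{i:4SLem1}, statement~\ref{i:4sfalem1} would be obtained by a lifted computation: $F^\ell(\mathcal{T}^*(\Sigma))$ is the image under $(x_0,u)\mapsto A^\ell x_0+\Lambda_\ell u$ of $\{(x_0,u):x_0\in\mathcal{T}^*(\Sigma),\ \Gamma_\ell x_0+\Theta_\ell u\in\mathcal{Y}^\ell\}$, whose polar is evaluated by the polar-of-image and polar-of-intersection rules of \cite[\S16]{Roc:70}; statement~\ref{i:4SLem1} is exactly the relative-interior condition that makes these rules exact, and the outcome is to be matched against the parallel lifted description of $(F^\circ)^{-\ell}\mathcal{V}^*(\Sigma^T)$. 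This is the point where I would be most careful, as the identity hinges on applying the duality without stray closures. Finally, for statement~\ref{i:4sfalem2}: the inclusion $X(F^\circ)\subseteq\bigcap_\ell X_\ell(F^\circ)$ is automatic; $\bigcap_\ell X_\ell(F^\circ)=\bigcap_\ell(F^\circ)^{-\ell}\mathcal{V}^*(\Sigma^T)$ follows because a length-$(\ell+n)$ trajectory has, by statement~\ref{i:4SLem4} applied to its tail, its $\ell$-th state already in $\mathcal{V}^*(\Sigma^T)$; and the reverse inclusion $\bigcap_\ell X_\ell(F^\circ)\subseteq X(F^\circ)$ is a compactness argument assembling the finite trajectories into an infinite one, using the invariance of statement~\ref{i:4SLem3} to remain in $\mathcal{V}^*(\Sigma^T)$ and the compactness of statement~\ref{i:4SLem2} to extract a convergent limit.
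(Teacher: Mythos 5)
Your plan follows the paper's own proof route part by part: statement~\ref{i:4SLem3} read off from \eqref{e:FpolSigmaT.1}; statement~\ref{i:4SLem1} converted by separation into the dual condition $\ker \Theta_\ell^T \cap (\Gamma_\ell^T)^{-1}\mathcal{V}^*(\Sigma^T)\cap(\mathcal{Y}^-)^\ell=\{0\}$ (your ``nonnegative on $\mathcal{Y}$'' is the same up to sign) and attacked by induction on $\ell$; statement~\ref{i:4SLem2} via the Slater point; statement~\ref{i:4SLem4} by discarding the input constraint and using $n$-step determination of $\mathcal{V}^*(\Sigma^T)$; statement~\ref{i:4sfalem1} by a lifted polarity computation made exact by statement~\ref{i:4SLem1}; statement~\ref{i:4sfalem2} with the same three-part skeleton. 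So the approach is the paper's. However, the two sub-steps you leave vague are exactly where the paper has to supply an idea, and as written your sketch does not close them. In the induction step of statement~\ref{i:4SLem1}, unwinding the block structure alone cannot produce a component $\psi$ with $C^T\psi\in\mathcal{V}^*(\Sigma^T)$: the separation vector $\phi$ only gives $\Gamma_{\ell+1}^T\phi\in\mathcal{V}^*(\Sigma^T)$, in which $C^T\phi_0$ is mixed with the contributions of all the other components. The paper's resolution is to observe that the kernel relations turn the remaining components into a trajectory of the process $F^-$ starting at $C^T\phi_0$ and ending in $\mathcal{V}^*(\Sigma^T)$; since $\gr(F^-)\subseteq\gr(F^\circ)$, iterating statement~\ref{i:4SLem3} pulls $C^T\phi_0$ back into $\mathcal{V}^*(\Sigma^T)$, whence $\phi_0\in\mathcal{L}(\Sigma^T)\cap\mathcal{Y}^-=\{0\}$ by the hypothesis. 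Your proposal never connects statement~\ref{i:4SLem3} to statement~\ref{i:4SLem1}, and that connection is the crux of the whole induction.

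The second gap is in statement~\ref{i:4sfalem2}. You appeal to ``the compactness of statement~\ref{i:4SLem2}'' to extract limits, but that statement concerns $F^\circ(0)\cap\mathcal{V}^*(\Sigma^T)$ only, whereas the limits must be taken inside the sets $Z_\ell=F^\circ(q)\cap(F^\circ)^{-\ell}\mathcal{V}^*(\Sigma^T)$ attached to the nonzero states $q$ your infinite trajectory visits; a priori these could be unbounded, and then no convergent subsequence exists. The paper bridges this with a scaling argument: if $\{x_i\}\subseteq F^\circ(q)\cap\mathcal{V}^*(\Sigma^T)$ were unbounded, then the points $\bigl(\tfrac{\alpha}{\|x_i\|}q,\tfrac{\alpha}{\|x_i\|}x_i\bigr)$ stay in the convex graph $\gr(F^\circ)$ and converge (along a subsequence) to some $(0,\bar x)\in\gr(F^\circ)\cap(\mathcal{V}^*(\Sigma^T)\times\mathcal{V}^*(\Sigma^T))$ with $\|\bar x\|=\alpha$ arbitrary, contradicting statement~\ref{i:4SLem2}. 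Only after this boundedness is in place can the nested nonempty compact sets $Z_\ell$ be intersected (the paper invokes \cite[Cor. 21.3.2]{Roc:70}) to produce a point of $F^\circ(q)$ lying in all $(F^\circ)^{-\ell}\mathcal{V}^*(\Sigma^T)$ simultaneously, i.e., weak $F^\circ$-invariance of the intersection. With these two repairs your plan coincides with the paper's proof.
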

\begin{proof} {\em \ref{i:4SLem3}:} This follows immediately from \eqref{e:FpolSigmaT.1} and definition of $\calV^*(\Sigma^T)$.\\
 
{\em \ref{i:4SLem1}:} Before proceeding to the actual proof, we note that $[\im \Theta_\ell + \Gamma_\ell \mathcal{T}^*(\Sigma)]  \cap \inte(\mathcal{Y}^\ell) \neq \emptyset$ if and only if
$$0\in \inte [\im \Theta_\ell + \Gamma_\ell \mathcal{T}^*(\Sigma) + \mathcal{Y}^\ell].$$
However, this is equivalent to
\beq\label{e:uefa-kupayi-al}
\ker \Theta_\ell^T \cap (\Gamma_\ell^T)^{-1} \mathcal{V}^*(\Sigma^T) \cap (\mathcal{Y}^-)^\ell = \{0\}.
\eeq
We will prove this latter equivalent condition by induction on $\ell$.

For $\ell=1$, this is already equivalent to the hypothesis $\mathcal{K}(\Sigma)\cap \inte(\mathcal{Y})\neq \emptyset$ and hence readily follows.

Suppose that \eqref{e:uefa-kupayi-al} holds for $\ell = k$. For $\ell = k+1$,  the set
$$ \im\Theta_{k+1} + \Gamma_{k+1}\mathcal{T}^*(\Sigma)+\mathcal{Y}^{k+1}$$
is equal to
$$\im \begin{bmatrix}
                    C\Lambda_k & D\\
                    \Theta_k & 0
                    \end{bmatrix}
                + 
             \begin{bmatrix}
                    CA^k \\ \Gamma_k
                   \end{bmatrix}\mathcal{T}^*(\Sigma)
                +    
            \mathcal{Y}\times \mathcal{Y}^k  
$$
in view of \eqref{4galathe}.
If we let $\bbm u_0\\ \bar{u}\ebm\in \mathcal{Y}^-\times (\mathcal{Y}^-)^k$ be such that 
$$
\bbm u_0\\ \bar{u}\ebm\in    \ker \begin{bmatrix}
                    \Lambda_k^TC^T & \Theta_k^T\\
                    D^T & 0
                    \end{bmatrix}
                \cap
            \begin{bmatrix}
                    (A^T)^kC^T & \Gamma_k^T
                   \end{bmatrix}^{-1}\mathcal{V}^*(\Sigma^T)
$$
then we have
$$ u_0\in \ker D^T\cap \mathcal{Y}^- \text{ and } (F^-)^k(C^Tu_0)\cap\mathcal{V}^*(\Sigma^T)\neq \emptyset.$$
Hence, it follows from the first part of the lemma that $C^Tu_0\in \mathcal{V}^*(\Sigma)$. Thus, we get $u_0\in \mathcal{L}(\Sigma^T)\cap \mathcal{Y}^-= \{0\}$. Therefore, the cone
$$\ker\Theta_{k+1}^T \cap (\Gamma_{k+1}^T)^{-1}\mathcal{V}^*(\Sigma^T)\cap(\mathcal{Y}^-)^{k+1}$$
is equal to
$$\{0\}\times(\ker \Theta_k^T\cap(\Gamma_k^T)^{-1}\mathcal{V}^*(\Sigma^T)\cap  (\mathcal{Y}^-)^k)$$
which is $\{0\}$ by the induction hypothesis. Hence, the result follows.\\
 
{\em \ref{i:4SLem2}:} We argue as follows
 \begin{align*}
  F^\circ(0)\cap \mathcal{V}^*(\Sigma^T)
    &= C^T[\ker D^T\cap (-\mathcal{Y}^\circ)]\cap \mathcal{V}^*(\Sigma^T)\\
    &= C^T[\ker D^T\!\cap\! (-\mathcal{Y}^\circ)\!\cap\! (C^T)^{-1}\mathcal{V}^*(\Sigma^T)]\\
    &= -C^T[\mathcal{L}(\Sigma^T)\cap\mathcal{Y}^\circ].
 \end{align*}
Now, $\mathcal{L}(\Sigma^T)\cap\mathcal{Y}^\circ$ is compact by the hypothesis of the lem\-ma. Then, it follows from \cite[Thm. 9.1]{Roc:70} that $C^T[\mathcal{L}(\Sigma^T)\cap\mathcal{Y}^\circ]$ is also compact since $\ker C^T\cap \ker D^T =\{0\}$. The rest follows from the fact that $\mathcal{L}(\Sigma^T)\cap\mathcal{Y}^\circ = \{0\}$ whenever $\calK(\Sigma)+\calY = \mathbb{R}^s$.\\
 
{\em \ref{i:4SLem4}:} Let $F_0^T$ denote the linear process corresponding to the constrained system $(\Sigma^T, \{0\})$. Then, we have $\gr(F^\circ) \subseteq \gr(F_0^T)$. This means that $X_n(F^\circ)\subseteq X_n(F_0^T)$. 
It follows from \cite[Thm. 7.12]{Trent:01} that $\calV^*(\Sigma^T) = X(F_0^T) = X_n(F_0^T)$. Hence, the result holds.\\

{\em \ref{i:4sfalem1}:} We first claim that
\begin{equation}\label{4claim0}
 F^\ell(\mathcal{T}^*(\Sigma))=\Lambda_{n+\ell}\Theta_{n+\ell}^{-1}(\mathcal{Y}^\ell\times \{0\}) 
\end{equation}
for all $\ell\geq 0$. We prove this statement by induction on $\ell$.

For $\ell = 0$, this follows from the definition of $\calT^*(\Sigma)$. 
Suppose that \eqref{4claim0} holds for $\ell= k$. For $\ell= k+1$, we have
\begin{align*}
 F^{k+1}(\mathcal{T}^*(\Sigma))
 &= F(F^k(\mathcal{T}^*(\Sigma)))\\
 &= \{ Ax + Bu\mid x\in \Lambda_{n+k}\Theta_{n+k}^{-1}(\mathcal{Y}^k\times \{0\})\\
 &~ \qquad \text{ and }Cx+Du\in \mathcal{Y}\}
\end{align*}
by the induction hypothesis and the definition of $F$. That is,
\begin{align*}
 \!\!\!\!\!\!\!\!\!\!F^{k+1}(\mathcal{T}^*(\Sigma))
 &= \{ A\Lambda_{n+k}\bar{u} + Bu \mid \Theta_{n+k}\bar{u}=\mathcal{Y}^k\times \{0\}\\
 &~ \qquad \text{ and }C\Lambda_{n+k} \bar{u}+Du\in \mathcal{Y}\}\\
 &= \begin{bmatrix}A\Lambda_{n+k} & B\end{bmatrix} \!\begin{bmatrix} C\Lambda_{n+k} & D\\ \Theta_{n+k} & 0\end{bmatrix}^{-1}\!\!\!(\mathcal{Y}^{k+1}\!\times\! \{0\})\\
 &= \Lambda_{n+k+1}\Theta_{n+k+1}^{-1}(\mathcal{Y}^{k+1}\times \{0\}).
\end{align*}
This proves \eqref{4claim0} by induction. For the rest, note first that one could also write $F^{\ell}(\mathcal{T}^*(\Sigma))$ as
$$
F^{\ell}(\mathcal{T}^*(\Sigma))=
\Lambda_{n+\ell} \begin{bmatrix}
                \Gamma_\ell \Lambda_n & \Theta_\ell\\
                \Theta_n & 0
              \end{bmatrix}^{-1}(\mathcal{Y}^{\ell}\times \{0\}).
$$
Then it follows from the second part of the lemma and \cite[Cor. $16.3.2$]{Roc:70} that
$$
[F^{\ell}(\mathcal{T}^*(\Sigma))]^\circ = (\Lambda_{n+\ell}^T)^{-1}\Theta_{n+\ell}^T[(\mathcal{Y}^\circ)^\ell\times \mathbb{R}^{ns}].
$$
However, the right hand side is nothing but $(F^\circ)^{-\ell}(\mathcal{V}^*(\Sigma^T)$.\\

{\em \ref{i:4sfalem2}:} The proof consists of two parts. First, we will prove that
\beq\label{e:cas-davasi-namusumuzdur}
\bigcap_{\ell\geq 1}(F^\circ)^{-\ell}\mathcal{V}^*(\Sigma^T)=\bigcap_{\ell\geq 1}X_\ell(F^\circ)
\eeq
and that
\beq\label{e:bizi-kume-dusurun}
X(F^\circ)=\bigcap_{\ell\geq 1}(F^\circ)^{-\ell}\mathcal{V}^*(\Sigma^T).
\eeq
From the definitions, we have
\beq\label{e:azizciler-vs-adnanhocacilar2}
\bigcap_{\ell\geq 1}(F^\circ)^{-\ell}\mathcal{V}^*(\Sigma^T)\subseteq \bigcap_{\ell\geq 1}X_\ell(F^\circ).
\eeq
It follows from the fourth part of the lemma that 
$$X_{\ell+n}(F^\circ) \subseteq (F^\circ)^{-\ell}\mathcal{V}^*(\Sigma^T)$$
for all $\ell \geq 1$. Then, we have
$$
\bigcap_{\ell\geq 1}X_\ell(F^\circ) = \bigcap_{\ell\geq 1}X_{\ell+n}(F^\circ)  \subseteq \bigcap_{\ell\geq 1} (F^\circ)^{-\ell}\mathcal{V}^*(\Sigma^T).
$$
Together with \eqref{e:azizciler-vs-adnanhocacilar2}, this results in \eqref{e:cas-davasi-namusumuzdur}. From \eqref{e:cas-davasi-namusumuzdur}, we have the inclusion
$$X(F^\circ) \subseteq \bigcap_{\ell\geq 1}(F^\circ)^{-\ell}\mathcal{V}^*(\Sigma^T).$$
In order to prove the reverse inclusion, we will show that the infinite intersection is weakly-$F^\circ$-invariant. This would imply \eqref{e:bizi-kume-dusurun} as $X(F^\circ)$ is the largest weakly-$F^\circ$-invariant set.
Let $q$ be an element of $\bigcap_{\ell\geq 1}(F^\circ)^{-\ell}\mathcal{V}^*(\Sigma^T)$. Our aim is to show that 
\begin{equation}\label{e:4aim}
F^\circ(q) \cap  \bigcap_{\ell\geq 1}(F^\circ)^{-\ell}\mathcal{V}^*(\Sigma^T)\! =\!  \bigcap_{\ell\geq 1}\big(F^\circ(q) \cap (F^\circ)^{-\ell}\mathcal{V}^*(\Sigma^T)\big) \!\neq\!\emptyset. 
\end{equation}
Define $Z_\ell=F^\circ(q)\cap (F^\circ)^{-\ell}\mathcal{V}^*(\Sigma^T)$. Since $q\in (F^\circ)^{-\ell}\mathcal{V}^*(\Sigma^T)$ for all $\ell\geq 1$, the set $Z_\ell$ is non-empty. It follows from the fifth part of the lemma that  $Z_{\ell+1} \subseteq Z_\ell$ for all $\ell\geq 1$. This means that \eqref{e:4aim} follows from Helly's Theorem, \cite[Cor. 21.3.2]{Roc:70} if each $Z_\ell$ is a compact convex set. Convexity of $Z_\ell$ is obvious. Next, we show that it is compact. As $\gr(F^\circ)$ is closed, $F^\circ(q)$ is also closed. By the fifth part of this lemma, the set $(F^\circ)^{-\ell}\mathcal{V}^*(\Sigma^T)$ is closed too. Hence $Z_\ell$ is closed for all $\ell\geq 1$. Due to first part of this lemma, we have $(F^\circ)^{-\ell}\mathcal{V}^*(\Sigma^T) \subseteq \mathcal{V}^*(\Sigma^T)$ for all $\ell\geq 1$. So we have $Z_\ell\subseteq \calV^*(\Sigma^T)$ and $F^\circ(q)\cap \mathcal{V}^*(\Sigma^T)\neq \emptyset$. We further claim that $F^\circ(q)\cap \mathcal{V}^*(\Sigma^T)$ is bounded. In order to see this, suppose on the contrary that $F^\circ(q)\cap \mathcal{V}^*(\Sigma^T)$ is unbounded. Then there exists an unbounded sequence $\{x_i\}_{i\geq 0}\subset F^\circ(q)\cap \mathcal{V}^*(\Sigma^T)$. So that $(q,x_i)\in \gr(F^\circ)\cap (\mathcal{V}^*(\Sigma^T)\times \mathcal{V}^*(\Sigma^T))$. Let  $\alpha\in [1, \infty)$. After replacing with a subsequence if necessary,  $\|x_i\|> \alpha$ for all $i\geq 0$ and $\{(\frac{\alpha}{\|x_i\|} q, \frac{\alpha}{\|x_i\|}x_i)\}$ converges to $ (0, \bar{x})\in \gr(F^\circ)\cap (\mathcal{V}^*(\Sigma^T)\times \mathcal{V}^*(\Sigma^T))$ with $\|\bar{x}\|=\alpha$. Since $\alpha$ can be arbitrarily large, this contradicts the third part of this lemma. Hence, $Z_\ell$ is compact for all $\ell\geq 1$.
\qquad\end{proof}

Now we are ready to prove the main results.

\subsection{Proof of Theorem \ref{t:4sumfullauxthm2}}
We first claim that
\begin{equation}\label{e:4rfts}
 \bigcup_{\ell \geq 1} R_\ell(F) = \bigcup_{\ell \geq 1} F^\ell(\mathcal{T}^*(\Sigma))
\end{equation}
The inclusion 
$$
\bigcup_{\ell \geq 1} R_\ell(F) \subseteq \bigcup_{\ell \geq 1} F^\ell(\mathcal{T}^*(\Sigma))
$$
is immediate. Therefore, it remains to prove the reverse inclusion. Note that
$\mathcal{T}^*(\Sigma)\subseteq R_n(F).$
Then, we have 
$$
F^\ell(\mathcal{T}^*(\Sigma))\subseteq R_{\ell+n}(F)
$$
for all $\ell\geq 1$. Taking union over all $\ell\geq 1$, we get
$$
\bigcup_{\ell \geq 1} F^\ell(\mathcal{T}^*(\Sigma)) \subseteq \bigcup_{\ell \geq 1} R_{n+\ell}(F).
$$
Then, \eqref{e:4rfts} follows from the fact that $\bigcup_{\ell \geq 1} R_{n+\ell}(F) = R(F)$. Dualizing \eqref{e:4rfts}, we get
$$
R(F)^\circ = \left[\bigcup_{\ell \geq 1} F^\ell(\mathcal{T}^*(\Sigma))\right]^\circ.
$$
Then, it follows from \cite[Cor. 16.5.2]{Roc:70} that
$$
R(F)^\circ = \bigcap_{\ell \geq 1} [F^\ell(\mathcal{T}^*(\Sigma)]^\circ.
$$
In view of Lemma~\ref{l:4SLem}.\ref{i:4sfalem1}, this results in
$$
R(F)^\circ = \bigcap_{\ell \geq 1} (F^\circ)^{-\ell}\mathcal{V}^*(\Sigma).
$$
Finally, we obtain
$$
R(F)^\circ = X(F^\circ).
$$
from Lemma~\ref{l:4SLem}.\ref{i:4sfalem2}.\qquad\endproof

\subsection{Proof of Theorem \ref{t:4sumfullthm2}}
{\em \ref{i:4sft21} $\Rightarrow$ \ref{i:4sft23}:} As $\calY^-\subseteq \calY^\circ$, \eqref{4lschar} and \eqref{4evchar} readily follow. To show that \eqref{i:4sft234} and \eqref{i:4sft233} hold. Let $q\in \calV^*_g([0^+\calY]^\perp, \Sigma^T)$ or
$$q\in \bbm A^T-\lambda I \\ B^T\ebm^{-1} \bbm C^T \\ D^T\ebm \calY^\mathrm{b}$$
for some $\lambda\in [0,1]$. Then, there exist $\{v_i\}_{i\geq 0}\in \mathcal{Y}^\mathrm{b}$ and a bounded $\{q_i\}_{i\geq 0}\subset \mathbb{R}^n$ with $q_0 = q$ such that
\begin{align*}
 q_{i+1} &=A^Tq_i-C^Tv_i\\
 0 &= B^Tq_i-D^Tv_i
\end{align*} 
for all $i\geq 0$. Since $\begin{bmatrix}
                  C & D
                 \end{bmatrix}^T$ is injective, $\{v_i\}_{i\geq 0}$ must be bounded too. In view of hyperbolicity of $\mathcal{Y}$, there must exist $\mu\in (0,1]$ such that $\{\mu v_i\}_{i\geq 0}\subseteq \mathcal{Y}^\circ$.
This means that $\{\mu q_i\}_{i\geq 0}$ is contained in $ X(F^\circ)$. Since $ X(F^\circ)=\pset{0}$, we get $q_i=0$ for all $i\geq 0$.\\

{\em \ref{i:4sft23} $\Rightarrow$ \ref{i:4sft22}:} Since we have $(F_{\mathrm{con}})^-  = F^-$, it follows from Theorem~6.3 in \cite{KaCa:15a} that \eqref{4lschar} and \eqref{4evchar} imply that $F_{\mathrm{con}}$ is reachable. Next, we will show that \eqref{i:4sft234} and \eqref{i:4sft233} imply strictness and weak asymptotic stability of $(F_{\mathrm{rec}})^{-1}$. 

Since $\calY$ is hyperbolic, $\calY^\mathrm{b}$ is closed. Hence, $(F_{\mathrm{rec}}^{-1})^+ = (F^\mathrm{b})^{-1}$. Moreover, $(F^\mathrm{b})^{-1}$ is closed and by \eqref{i:4sft233}, $(F^\mathrm{b})^{-1}(0) =\{0\}$. Equivalently, $F_{\mathrm{rec}}^{-1}$ is strict. 

For the weak asymptotic stability of $(F_{\mathrm{rec}})^{-1}$, we will employ Theorem~\ref{t:4thmsmir}. The condition \eqref{i:4sft233} immediately implies that all eigenvalues of $(F^\mathrm{b})^{-1}$ are less than $1$. Let 
$$\mathcal{W} = \im(F^\mathrm{b})\cap [-\im(F^\mathrm{b})].$$
Since $(F^\mathrm{b})^{-1}(0)=\{0\}$, the restriction of $(F^\mathrm{b})^{-1}$ to $\mathcal{W}$ is a linear map. Let $\mathcal{J}\subseteq \mathcal{W}$ be the largest subspace invariant under $(F^\mathrm{b})^{-1}|_\mathcal{W}$. We would like to show that all eigenvalues of the linear map $\Gamma=(F^\mathrm{b})^{-1}|_{\mathcal{J}}$ are in the open unit circle. Suppose on the contrary that $\Gamma$ has eigenvalues outside the open unit circle. Since $\mathcal{J}$ is $\Gamma$ invariant, one can then decompose $\mathcal{J}$ into two subspaces $\mathcal{J}_1$ and $\mathcal{J}_2\neq \{0\}$ such that $\mathcal{J}=\mathcal{J}_1\oplus\mathcal{J}_2$ where $\Gamma|_{\mathcal{J}_1}$ has only eigenvalues in the open unit circle and $\bar{\Gamma}=\Gamma|_{\mathcal{J}_2}$ has only eigenvalues outside the open unit circle. Note that $\bar{\Gamma}:\mathcal{J}_2\to \mathcal{J}_
2$ is an isomorphism. Hence, $\bar{\Gamma}^{-1}:\mathcal{J}_2\to \mathcal{J}_2$ is also an isomorphism with eigenvalues in the closed unit circle. Then, $q_{k+1}\in \bar{\Gamma}^{-1}(q_k)$ has at least one bounded trajectory.
However, this contradicts \eqref{i:4sft234} as we have
\begin{align*}
\gr(\bar{\Gamma}^{-1})
&\subseteq \gr(F^\mathrm{b})\cap[-\gr(F^\mathrm{b})]\\
&=\begin{bmatrix}
A^T & -I\\
B^T & 0
\end{bmatrix}^{-1}
\begin{bmatrix}
C^T\\
D^T
\end{bmatrix}[0^+\mathcal{Y}]^\perp. 
\end{align*}
Therefore, $\mathcal{J}_2=\{0\}$ and the hypothesis of Theorem~\ref{t:4thmsmir} is satisfied. Consequently, Theorem~\ref{t:4thmsmir} implies that $(F_{\mathrm{rec}})^{-1}$ is weakly asymptotically stable.\\

{\em \ref{i:4sft22} $\Rightarrow$ \ref{i:4sft21}:} If $F_{\mathrm{con}}$ is reachable, then $R(F_{\mathrm{con}}) = \mathbb{R}^n$, by Theorem~6.3 in \cite{KaCa:15a}.
In particular, we have $R_\ell(F_{\mathrm{con}}) = \mathbb{R}^n$ for some $\ell\geq 1$ since $F_{\mathrm{con}}$ is a process. Then, we get $0 \in \inte(R_\ell(F))$ for the same $\ell$. In other words, there exists a non-negative real number $\mu$ such that $\mu \mathbb{B}\subseteq R(F)$. Since $\gr(F_{\mathrm{rec}})\subseteq \gr(F)$, we have
\begin{equation}\label{e:FrecMuB}
\bigcup_{\ell\geq 1} F_{\mathrm{rec}}^\ell (\mu\mathbb{B}) \subseteq R(F). 
\end{equation}
However, the left hand side of \eqref{e:FrecMuB} is $\mathbb{R}^n$ since $(F_{\mathrm{rec}})^{-1}$ is strict and weakly asymptotically stable. Hence, $R(F) = \mathbb{R}^n$. Then, the result follows from Theorem \ref{t:4sumfullauxthm2}.\qquad\endproof

\subsection{Proof of Lemma~\ref{4sumfullauxthm}}

{\em \ref{i:4sfat1}:} From the definitions of $X_n(F)$ and $X(F)$, we already know that $X(F)\subseteq X_n(F)$ and $X(F)$ is the largest weakly-$F$-invariant convex set. Therefore, it is enough to show that $X_n(F)$ is weakly-$F$-invariant. This is equivalent to proving $X_n(F)=X_{n+1}(F)$. A priori we have $X_n(F)\supseteq X_{n+1}(F)$. To prove the reverse inclusion, let $x\in X_n(F)$. Then, there exist $x_0,x_1,\dots, x_n$ and $u_0,u_1,\dots, u_{n-1}$ such that $x_0=x$ and
\begin{align*}
x_{i+1} &= Ax_i + Bu_i\\
\mathcal{Y} &\ni Cx_i + Du_i
\end{align*}
for all $i$ with $0\leq i \leq n-1$. Since $\mathcal{K}(\Sigma)+\mathcal{Y}=\mathbb{R}^s$, there exists $\bar{x}\in \mathcal{T}^*(\Sigma)$, $u\in \mathbb{R}^m$ and $y\in \mathcal{Y}$ such that
$$-C\bar{x} - Du + y = Cx_n.$$
In other words, we have
\begin{equation}\label{4xnxbar}
C(x_n+\bar{x})+Du \in \mathcal{Y}. 
\end{equation}
Since $\bar{x}\in \mathcal{T}^*(\Sigma)$, there exist 
$$\bar{x}_0,\bar{x}_1,\dots, \bar{x}_n\text{ and }\bar{u}_0,\bar{u}_1,\dots, \bar{u}_{n-1}$$
such that $\bar{x}_0 = 0$, $\bar{x}_n = \bar{x}$, and
\begin{align*}
\bar{x}_{i+1} &= A\bar{x}_i + B\bar{u}_i\\
0 &= C\bar{x}_i + D\bar{u}_i
\end{align*}
for all $i$ with $0\leq i \leq n-1$. This means that we get $x_n+\bar{x}\in F^n(x)$ by starting from $x$ and applying the sequence of inputs $\{u_i+\bar{u}_i\}_{0\leq i\leq n-1}$. Then, the relation \eqref{4xnxbar} implies $x\in X_{n+1}(F)$. Consequently, $X_n(F) = X_{n+1}(F)$ and the result follows.\\

{\em \ref{i:4sfat2}:} Using the matrices in \eqref{4galathe}, we write
$$X_\ell (F)\!=\!\Gamma_\ell^{-1}[\im\Theta_\ell +\mathcal{Y}^\ell]\text{ and }R_\ell (F^\circ)\!=\!\Gamma_\ell^T[\ker\Theta_\ell^T \cap(-\mathcal{Y}^\circ)^\ell].$$
Then, we obtain
$$[X_\ell(F)]^\circ = -R_\ell (F^\circ)$$
from Lemma~\ref{l:4SLem}.\ref{i:4SLem1} and \cite[Cor. $16.3.2$]{Roc:70}.
The rest follows immediately from the first part of this lemma.\\

{\em \ref{i:4sfat3}:} Using the second part of this lemma, we observe that
\begin{align*}
 \big(\conv [X(F)\cup \mathcal{T}^*(\Sigma)]\big)^\circ &= X(F)^\circ\cap \mathcal{V}^*(\Sigma^T)\\
 &= -[R(F^\circ)\cap \mathcal{V}^*(\Sigma^T)].
\end{align*}
From Lemma~\ref{l:4SLem}.\ref{i:4SLem2}, we have $F^\circ(0)\cap \calV^*(\Sigma^T) = \{0\}$. Then, we must have $R(F^\circ)\cap \mathcal{V}^*(\Sigma^T)=\{0\}$ too since otherwise we would have
$(F^\circ)^N(q)\cap \mathcal{V}^*(\Sigma)\neq \emptyset$
for some non-zero $q\in F^\circ(0)$ and $N\geq 1$. This would mean that $q\in F^\circ(0)\cap\mathcal{V}^*(\Sigma^T)$ and hence lead to a contradiction. Therefore, we have
$$R(F^\circ)\cap \mathcal{V}^*(\Sigma^T)=\{0\}$$
and the result follows.\qquad\endproof

\subsection{Proof of Theorem \ref{t:RFTstar}}
The inclusion $\calT^*(\Sigma) \subseteq R(F)$ is obvious. In order to prove the reverse inclusion, let $x\in R(F)$. Then there exists $\ell\geq 1$, $\{u_k\}_{0\leq k\leq \ell-1}$ and $\{x_k\}_{0\leq k \leq \ell}$ with $x_0=0$  and $x_\ell=x$, such that
\begin{align*}
x_{k+1} &= Ax_k + Bu_k\\
\calY &\ni Cx_k + Du_k
\end{align*}
for all $k$ with $0\leq k \leq \ell-1$. The hypothesis $\calK(\Sigma)\cap \calY = \{0\}$ imposes $x_k\in \calT^*(\Sigma)$ for all $k$. Hence, $x\in \calT^*(\Sigma)$ and the equality holds.

\subsection{Proof of Theorem \ref{t:Zint}}
{\em\ref{4zintt:1} $\Rightarrow$ \ref{4zintt:2}:} If $(\Sigma,\calY)$ is reachable, then by Theorem \ref{t:RFTstar}, $X(F)\subseteq \calT^*(\Sigma)$. Since we always have $\calV^*(\Sigma)\subseteq X(F)$, we get $\calV^*(\Sigma) = \calR^*(\Sigma)$. In order to prove the equality of $X(F)$ and $\calV^*(\Sigma)$, let $x\in X(F)$. Then there exists, $\{u_k\}_{k\geq 0}$ and $\{x_k\}_{k\geq 0}$ with $x_0=x$, such that
\begin{align*}
x_{k+1} &= Ax_k + Bu_k\\
\calY \ni y_k &= Cx_k + Du_k
\end{align*}
for all $k\geq 0$. However, the hypothesis $\calK(\Sigma)\cap \calY = \{0\}$ imposes $y_k=0$ for all $k\geq 0$. Therefore, $x\in \calV^*(\Sigma)$, and so $X(F) = \calV^*(\Sigma)$.

{\em\ref{4zintt:2} $\Rightarrow$ \ref{4zintt:1}:} Obvious.


\section{Conclusion}
We gave a characterization of the reachability for discrete-time linear systems with convex output constraints. It extends all previously known char\-ac\-ter\-i\-za\-tion in the literature, as well as our results in \cite{KaCa:15a} on controllability of discrete-time linear systems with conic output constraints. Our results justify why the conic output constraint case must be handled first, before attempting a characterization of the convex output constraint case. In fact, we prove that convex set-valued mapping $F$ is reachable if and only if the convex process $F_{\mathrm{con}}$ is reachable and the convex process $(F_{\mathrm{rec}})^{-1}$ is weakly asymptotically stable. This result is important from two aspects. First, it gives a characterization of the reachability of a less structured set-valued mapping in terms of the properties of two more structured set-valued mappings, namely processes. Second, it reveals the relationship between the reachability problem and a stability problem. Hence, making the connection between different aspects of constrained systems more clear.

An interesting future work might be attempting to solve the constrained stabilization problem in this setting, i.e. by combining techniques of geometric control theory and with that of convex analysis. In particular, it would be interesting to give a characterization of null-controllability for such systems.

\bibliographystyle{siam}        
\bibliography{conic-revision} 

\end{document}